	\newcommand*\patchAmsMathEnvironmentForLineno[1]{%
	 \expandafter\let\csname old#1\expandafter\endcsname\csname #1\endcsname
	 \expandafter\let\csname oldend#1\expandafter\endcsname\csname end#1\endcsname
	 \renewenvironment{#1}%
	 {\linenomath\csname old#1\endcsname}%
	 {\csname oldend#1\endcsname\endlinenomath}}%
	\newcommand*\patchBothAmsMathEnvironmentsForLineno[1]{%
	 \patchAmsMathEnvironmentForLineno{#1}%
	 \patchAmsMathEnvironmentForLineno{#1*}}%
\newtheorem{lemma}{Lemma}[section]
\newtheorem{theorem}[lemma]{Theorem}
\newtheorem{corollary}[lemma]{Corollary}
\numberwithin{equation}{section}
\def\Left{\operatorname{\mathit{left}}}		
\def\Right{\operatorname{\mathit{right}}}	
\def\Head{\operatorname{\mathit{head}}}		
\def\Tail{\operatorname{\mathit{tail}}}		
\def\Half{\tfrac{1}{2}}
\def\Universal#1{\widetilde{#1}}
\def\EMPH#1{\textcolor{Blue}{\textbf{\boldmath \emph{#1}}}}
\renewcommand*\env@matrix[1][*\c@MaxMatrixCols c]{%
  \hskip -\arraycolsep
  \let\@ifnextchar\new@ifnextchar
  \array{#1}}
\def\Real{\mathbb{R}}
\def\abs#1{\mathopen| #1 \mathclose|}
\def\norm#1{\mathopen| #1 \mathclose|}
\def\seq#1{\mathopen\langle #1 \mathclose\rangle}
\def\arc#1#2{#1\mathord\shortrightarrow#2}
\def\Dx{\Delta\! x}	
\def\Dy{\Delta\! y}	
\def\Torus{\mathbb{T}}
\DeclareRobustCommand{\colvec}{\genfrac(){0pt}{}}
\newcommand{\rowvec}[2]{\left({#1},{#2}\right)}
\newcommand{\rowdet}[2]{\det\rowvec{#1}{#2}}
\newcommand{\Digamma}{\mathord{\Gamma\!\!\!\rlap{\raisebox{0.25ex}{\small-}}\,\,\,}} 
\title{A Toroidal Maxwell--Cremona--Delaunay Correspondence%
		\thanks{Portions of this work were supported by NSF grant CCF-1408763.  A preliminary version of this paper was presented at the 36th International Symposium on Computational Geometry~\cite{el-tmcdc-20}.}}
\author{\href{http://jeffe.cs.illinois.edu}{Jeff Erickson}
		\qquad
		\href{https://patrickl.in/}{Patrick Lin}
		\\[1ex]
		University of Illinois, Urbana-Champaign}
\date{
Accepted to \textit{Journal of Computational Geometry} --- February 4, 2022}
\begin{document}

\begin{titlepage}

\maketitle

\begin{bigabstract}\noindent
We consider three classes of geodesic embeddings of graphs on Euclidean flat tori:
\begin{itemize}
\item
A toroidal graph embedding $\Gamma$ is \emph{positive equilibrium} if it is possible to place positive weights on the edges, such that the weighted edge vectors incident to each vertex of $\Gamma$ sum to zero.
\item
A toroidal graph embedding $\Gamma$ is \emph{reciprocal} if there is a geodesic embedding $\Gamma^*$ of its dual on the same flat torus, where each edge of $\Gamma$ is orthogonal to the corresponding dual edge in~$\Gamma^*$.
\item
A toroidal graph embedding $\Gamma$ is \emph{coherent} if it is possible to assign weights to the vertices, so that $\Gamma$ is the (intrinsic) weighted Delaunay graph of its vertices.
\end{itemize}
The classical Maxwell--Cremona correspondence and the well-known correspondence between convex hulls and weighted Delaunay triangulations imply that the analogous concepts for planar graph embeddings (with convex outer faces) are equivalent.  Indeed, all three conditions are equivalent to $\Gamma$ being the projection of the 1-skeleton of the lower convex hull of points in~$\Real^3$.  However, this three-way equivalence does not extend directly to geodesic graph embeddings on flat tori.  On any flat torus, reciprocal and coherent embeddings are equivalent, and every reciprocal embedding is in positive equilibrium, but not every positive equilibrium embedding is reciprocal.  We establish a weaker correspondence: Every positive equilibrium embedding on any flat torus is affinely equivalent to a reciprocal/coherent embedding on \emph{some} flat torus.
\end{bigabstract}
\thispagestyle{empty}

\setcounter{page}{0}
\end{titlepage}

\pagestyle{myheadings}
\markboth{A Toroidal Maxwell--Cremona--Delaunay Correspondence}
{Jeff Erickson and Patrick Lin}

\section{Introduction}

The Maxwell--Cremona correspondence is a fundamental theorem establishing an equivalence between three different structures on a straight-line embedding $\Gamma$ of a graph $G$ into the plane:

\begin{itemize}
\item An \emph{equilibrium stress} on $\Gamma$ is an assignment of non-zero weights to the edges of $G$, such that the weighted edge vectors around every interior vertex $p$ sum to zero:
\[
	\sum_{p \colon pq \in E_{\strut}} \omega_{pq} (p-q) = \colvec{0}{0}
\]
\item
A \emph{reciprocal diagram} for  $\Gamma$ is a straight-line embedding $\Gamma^*$ dual to $\Gamma$, in which every dual edge~$e^*$ is orthogonal to the corresponding primal edge $e$.
\item
A \emph{polyhedral lifting} of $\Gamma$ assigns $z$-coordinates to the vertices of $\Gamma$, so that the resulting lifted vertices in~$\Real^3$ are not all coplanar, but the lifted vertices of each face of $\Gamma$ are coplanar.
\end{itemize}

Building on earlier seminal work of Varignon \cite{v-nms-1725}, Rankine \cite{r-mam-58,r-pepf-64}, and others, Maxwell \cite{m-rffdf-70,m-atrpf-67,m-rfdf-64} proved
that any straight-line planar drawing $\Gamma$ with an equilibrium stress has both a reciprocal diagram and a polyhedral lifting.  In particular, positive and negative stresses correspond to convex and concave edges in the polyhedral lifting, respectively.  Moreover, for any equilibrium stress $\omega$ on~$\Gamma$, the vector $1/\omega$ is an equilibrium stress for the reciprocal diagram~$\Gamma^*$.  Finally, for any polyhedral liftings of~$\Gamma$, one can obtain a polyhedral lifting of the reciprocal diagram~$\Gamma^*$ via projective duality.  Maxwell's analysis was later extended and popularized by Cremona \cite{c-frnsg-72,c-gs-72}
and others; the correspondence has since been rediscovered several times in other contexts \cite{a-caecc-87,h-dcaps-77}.  More recently, Whiteley \cite{w-mspp-82} proved the converse of Maxwell's theorem: every reciprocal diagram and every polyhedral lift corresponds to an equilibrium stress; see also Crapo and Whiteley~\cite{cw-psspp-93}.  For modern expositions of the Maxwell--Cremona correspondence aimed at computational geometers, see Hopcroft and Kahn \cite{hk-prga-92}, Richter-Gebert \cite[Chapter 13]{r-rsp-96}, or Rote, Santos, and Streinu \cite{rss-ps-08}. 

If the outer face of $\Gamma$ is convex, the Maxwell--Cremona correspondence implies an equivalence between equilibrium stresses in $\Gamma$ that are \emph{positive} on every interior edge, \emph{convex} polyhedral liftings of~$\Gamma$, and reciprocal \emph{embeddings}~$\Gamma^*$.  Moreover, as Whiteley \etal~\cite{wapc-cpdts-13} and Aurenhammer~\cite{a-caecc-87} observed, the well-known equivalence between convex liftings and weighted Delaunay complexes~\cite{v-napct2-08, a-rpccc-87, a-pdpaa-87, b-vdch-79, es-vda-86} implies that all three of these structures are equivalent to a fourth:

\begin{itemize}
\item
A \emph{Delaunay weighting} of $\Gamma$ is an assignment of weights to the vertices of $\Gamma$, so that $\Gamma$ is the (power-)weighted Delaunay graph \cite{a-pdpaa-87,bs-dloss-07} of its vertices.
\end{itemize}

Among many other consequences, combining the Maxwell--Cremona correspondence with Tutte's spring-embedding theorem \cite{t-hdg-63} yields an elegant geometric proof of Steinitz's theorem \cite{sr-vtp-34,s-pr-1916} that every 3-connected planar graph is the 1-skeleton of a 3-dimensional convex polytope.  The Maxwell--Cremona correspondence has been used for scene analysis of planar drawings \cite{cw-psspp-93,s-rpfld-88,a-caecc-87,a-rpccc-87,h-dcaps-77}, finding small grid embeddings of planar graphs and polyhedra \cite{eg-dspgt-95,cgt-cdg23-96,os-qst-94,r-rsp-96,rrs-sge3-11,s-d3gvr-11,ds-esppg-17,is-dtcsg-16}, and several linkage reconfiguration problems \cite{cdr-ilstl-02,do-gfalo-07,s-prmp-06,s-eprmp-06,orsss-nfwnr-04}.

It is natural to ask how or whether these correspondences extend to graphs on surfaces other than the Euclidean plane.  Lovász \cite[Lemma~4]{l-rpcvn-01} describes a spherical analogue of Maxwell's polyhedral lifting in terms of Colin de Verdière matrices \cite{c-nigcp-90,c-ngicp-93}; see also \cite{klv-cvnsrg-97}.  Izmestiev \cite{i-skfen-19} provides a self-contained proof of the correspondence for planar frameworks, along with natural extensions to frameworks in the sphere and the hyperbolic plane.  Finally, and most closely related to the present work, Borcea and Streinu \cite{bs-lsppf-15}, building on their earlier study of rigidity in infinite periodic frameworks \cite{bs-mrpg-11,bs-pff-10}, develop an extension of the Maxwell--Cremona correspondence to infinite periodic graphs in the plane, or equivalently, to geodesic graphs on the Euclidean flat torus.  Specifically, Borcea and Streinu prove that \emph{periodic} polyhedral liftings correspond to \emph{periodic} stresses satisfying an additional homological~constraint.\footnote{Phrased in terms of toroidal frameworks, Borcea and Streinu consider only equilibrium stresses for which the corresponding reciprocal toroidal framework contains no essential cycles.  The same condition was also briefly discussed by Crapo and Whiteley \cite[Example 3.6]{cw-psspp-93}.}

\subsection{Our Results}

In this paper, we develop a different generalization of the Maxwell--Cremona--Delaunay  correspondence to geodesic embeddings of graphs on Euclidean flat tori.  Our work is inspired by and uses Borcea and Streinu's results \cite{bs-lsppf-15}, but considers a different aim.  Stated in terms of infinite periodic planar graphs, Borcea and Streinu study periodic equilibrium stresses, which necessarily include both positive and negative stress coefficients, that include periodic \emph{polyhedral lifts}; whereas, we are interested in periodic \emph{positive} equilibrium stresses that induce periodic reciprocal \emph{embeddings} and periodic \emph{Delaunay weights}.  This distinction is aptly illustrated in Figures~8–10 of Borcea and Streinu's paper \cite{bs-lsppf-15}.

Recall that a Euclidean flat torus $\Torus$ is the metric space obtained by identifying opposite sides of an arbitrary parallelogram in the Euclidean plane.  A \emph{geodesic embedding}~$\Gamma$ of a graph $G$ on the flat torus~$\Torus$ maps the vertices of $G$ to distinct points in $\Torus$ and the edges of $G$ to interior-disjoint “line segments”.  Equilibrium stresses, reciprocal embeddings, and weighted Delaunay complexes are all well-defined in the intrinsic metric of the flat torus.  We prove the following correspondences for any sufficiently well-connected geodesic embedding~$\Gamma$ on any flat torus $\Torus$.
\begin{itemize}
\item
Any equilibrium stress for $\Gamma$ is also an equilibrium stress for the affine image of $\Gamma$ on any other flat torus $\Torus'$ (Lemma~\ref{L:equ-affine}).  Equilibrium depends only on the common \emph{affine} structure of all flat tori.

\item
Any reciprocal embedding $\Gamma^*$ on $\Torus$---that is, any geodesic embedding dual to $\Gamma$ such that corresponding edges are orthogonal---defines unique equilibrium stresses in both~$\Gamma$ and~$\Gamma^*$ (Lemma~\ref{L:rec-implies-equ}).

\item
$\Gamma$ has a reciprocal embedding if and only if $\Gamma$ is 
 a weighted Delaunay complex.  Specifically, each reciprocal diagram for $\Gamma$ induces an essentially unique set of Delaunay weights for the vertices of~$\Gamma$ (Theorem~\ref{T:recip-implies-del}).  Conversely, each set of Delaunay weights for $\Gamma$ induces a \emph{unique} reciprocal diagram~$\Gamma^*$, namely the corresponding weighted Voronoi diagram (Lemma~\ref{L:coh-implies-recip}).  Thus, unlike in the plane, a reciprocal diagram $\Gamma^*$ may not be a weighted Voronoi diagram of the vertices of $\Gamma$, but some unique translation of $\Gamma^*$ is. 

\item
Unlike in the plane, $\Gamma$ may have equilibrium stresses that are not induced by reciprocal embeddings; more generally, not every equilibrium embedding on $\Torus$ is reciprocal (Theorem~\ref{Th:equ-not-recip}).  Unlike equilibrium, reciprocality depends on the \emph{conformal} structure of $\Torus$, which is determined by the shape of its fundamental parallelogram.  We derive a simple geometric condition that characterizes which equilibrium stresses are reciprocal on $\Torus$ (Lemma~\ref{L:nonsq-abg}).

\item
More generally, we show that for any equilibrium stress on $\Gamma$, there is a flat torus~$\Torus'$, unique up to rotation and scaling of its fundamental parallelogram, such that the same equilibrium stress is reciprocal for the affine image of $\Gamma$ on~$\Torus'$ (Theorem~\ref{T:skew-rec-nonlinear}).  In short, every equilibrium stress for $\Gamma$ is reciprocal on \emph{some} flat torus.  This result implies a natural toroidal analogue of Steinitz's theorem (Theorem \ref{Th:torus-steinitz}): Every essentially 3-connected torus graph~$\Gamma$ is homotopic to a weighted Delaunay complex on some flat torus.

\end{itemize}


\subsection{Other Related Results}

Our results rely on a natural generalization (Theorem~\ref{Th:tutte}) of Tutte's spring-embedding theorem to the torus, first proved (in greater generality) by Colin de Verdière \cite{c-crgtd-91}, and later proved again, in different forms, by Delgado-Friedrichs~\cite{d-eppgc-04}, Lovász~\cite[Theorem~7.1]{l-dafe-04}\cite[Theorem~7.4]{l-gg-19}, Gortler, Gotsman, and Thurston \cite{ggt-domam-06}, and (in greater generality) Hass and Scott \cite{hs-seshm-15}.
Steiner and Fischer~\cite{sf-ppc2m-04} and Gortler~\etal~\cite{ggt-domam-06} observed that this toroidal spring embedding can be computed by solving the Laplacian linear system defining the equilibrium conditions.  We describe this result and the necessary calculation in more detail in Section~\ref{S:background}.  Equilibrium and reciprocal graph embeddings can also be viewed as discrete analogues of harmonic and holomorphic functions \cite{l-gg-19,l-dafe-04}.

Our weighted Delaunay graphs are (the duals of) \emph{power diagrams} \cite{a-pdpaa-87,ai-gravd-88} or \emph{Laguerre-Voronoi diagrams}~\cite{iim-vdlga-85} in the intrinsic metric of the flat torus.  Toroidal Delaunay triangulations are commonly used to generate finite-element meshes for simulations with periodic boundary conditions, and several efficient algorithms for constructing these triangulations are known \cite{mr-vdo-97,gm-cgs-01,ct-dtced-16,btv-dtosl-16}.  Building on earlier work of Rivin \cite{r-esssh-94} and Indermitte \etal~\cite{iltc-vdpfs-01}, Bobenko and Springborn \cite{bs-dloss-07} proved that on any piecewise-linear surface, intrinsic Delaunay triangulations can be constructed by an intrinsic incremental flipping algorithm, mirroring the classical planar algorithm of Lawson \cite{l-tt-72}; their analysis extends easily to intrinsic weighted Delaunay graphs.  Weighted Delaunay complexes are also known as \emph{regular} or \emph{coherent} subdivisions \cite{z-lp-95,drs-tsaa-10}.

Finally, equilibrium and reciprocal embeddings are closely related to the celebrated Koebe-Andreev circle-packing theorem: Every planar graph is the contact graph of a set of interior-disjoint circular disks \cite{k-kka-36,a-cpls-70,a-cpfvl-70}; see Felsner and Rote \cite{fr-pdcpr-18} for a simple proof, based in part on earlier work of Brightwell and Scheinerman \cite{bs-rpg-93} and Mohar \cite{m-ptcpa-93}.  The circle-packing theorem was generalized to higher-genus surfaces by Colin de Verdière \cite{c-eccdm-89,c-pvpec-91} and Mohar \cite{m-cpmec-97,m-cpmpt-97}.  In particular, Mohar proved that any well-connected graph embedding on the torus is \emph{homotopic} to the contact graph of an essentially unique circle packing \emph{for a unique Euclidean metric} on the torus.  This disk-packing representation immediately yields a weighted Delaunay graph, where the areas of the disks are the vertex weights.  We revisit and extend this result in Section~\ref{S:Steinitz}.

Discrete harmonic and holomorphic functions, circle packings, and intrinsic Delaunay triangulations have numerous applications in discrete differential geometry; we refer the reader to monographs by Crane \cite{c-ddgai-19}, Lovász \cite{l-gg-19}, and Stephenson \cite{s-icptd-05}.

\section{Background and Definitions}
\label{S:background}

\subsection{Flat Tori}

A \EMPH{flat torus} is the metric surface obtained by identifying opposite sides of a  parallelogram in the Euclidean plane.  Specifically, for any nonsingular $2\times 2$ matrix $M = \begin{psmallmatrix} a & b \\ c & d \end{psmallmatrix}$, let~\EMPH{$\Torus_M$} denote the flat torus obtained by identifying opposite edges of the \EMPH{fundamental parallelogram~$\lozenge_M$} with vertex coordinates $\colvec{0}{0}$, $\colvec{a}{c}$, $\colvec{b}{d}$, and $\colvec{a+b}{c+d}$.  In particular, the \emph{square} flat torus \EMPH{$\Torus_\square$} $= \Torus_I$ is obtained by identifying opposite sides of the Euclidean unit square~$\square = \lozenge_I = [0,1]^2$.  The linear map $M\colon \Real^2\to\Real^2$ naturally induces a homeomorphism~$\underline{M}$ from $\Torus_\square$ to $\Torus_M$.

Equivalently, $\Torus_M$ is the quotient space of the plane $\Real^2$ with respect to the lattice of translations generated by the columns of $M$; in particular, the square flat torus is the quotient space $\Real^2/\Z^2$.  The quotient map $\pi_M \colon \Real^2 \to \Torus_M$ is called a \emph{covering} map or \EMPH{projection}.  A \EMPH{lift} of a point $p\in \Torus_M$ is any point in the preimage $\pi_M^{-1}(p) \subset \Real^2$.  A \EMPH{geodesic} in $\Torus_M$ is the projection of any line segment in $\Real^2$; we emphasize that geodesics are \emph{not} necessarily shortest paths.  A \EMPH{closed geodesic} is a geodesic whose endpoints coincide.

\subsection{Graphs, Drawings, and Embeddings}

We regard each edge of an undirected graph $G$ as a pair of opposing \EMPH{darts}, each directed from one endpoint, called the \EMPH{tail} of the dart, to the other endpoint, called its \EMPH{head}.  For each edge $e$, we arbitrarily label the darts \EMPH{$e^+$} and \EMPH{$e^-$}; we call $e^+$ the \EMPH{reference dart} of~$e$.  We explicitly allow graphs with loops and parallel edges.  At the risk of confusing the reader, we often write $\arc{p}{q}$ to denote an arbitrary dart with tail $p$ and head $q$, and $\arc{q}{p}$ for the reversal of $\arc{p}{q}$.

A \EMPH{drawing} of a graph $G$ on a torus $\Torus$ is any continuous function $\Gamma$ from $G$ (as a topological space) to $\Torus$, which maps vertices of $G$ to points in $\Torus$ and edges of $G$ to curves between the images of their endpoints.  A \emph{vertex} of a drawing $\Gamma$ is the image $\Gamma(p)$ of some vertex $p$ of $G$; similarly, an \emph{edge} of $\Gamma$ is the curve $\Gamma(e)$ (formally, the restriction~$\Gamma|_e\colon {[0,1]\to\Torus}$) for some edge $e$ of~$G$.

An \EMPH{embedding} is an injective drawing, which maps vertices to \emph{distinct} points and edges to \emph{simple, interior-disjoint} curves.  The \EMPH{faces} of an embedding are the components of the complement of the image of the graph; we consider only \emph{cellular} embeddings, in which all faces are open disks.  In any embedding, \EMPH{$\Left(d)$} and \EMPH{$\Right(d)$} denote the faces immediately to the left and right of (the image of) any dart~$d$; these could be the same face.  The complex of vertices, edges, and faces induced by a cellular  embedding is called a \EMPH{map}.  

Let $\Gamma$ be any drawing of a graph $G$ on the torus $\Torus_M$.  The \EMPH{universal cover} of the drawing $\Gamma$ is the unique infinite planar graph drawing $\Universal\Gamma \colon\Universal{G}\into \Real^2$ that is periodic with respect to the lattice generated by the columns of $M$, such that the covering map $\pi_M$ projects the image of $\Universal\Gamma$ onto the image of~$\Gamma$.  (The infinite graph~$\Universal{G}$ is a function of the embedding $\Gamma$, not just the underlying graph $G$.)  In particular, the covering map~$\pi_M$ projects each vertex or edge of $\Universal\Gamma$ to a vertex or edge of~$\Gamma$, respectively, and each vertex of $\Gamma$ lifts to an infinite lattice of vertices of $\Universal\Gamma$.  Moreover, if $\Gamma$ is an embedding, then its universal cover $\Universal\Gamma$ is also an embedding, and the covering map $\pi_M$ projects each face of $\Universal\Gamma$ to a face of~$\Gamma$.  

A drawing $\Gamma$ is \EMPH{geodesic} if it maps each edge to a geodesic, or equivalently, if its universal cover $\Universal\Gamma$ maps each edge to a straight line segment.


We call $\Gamma$ \EMPH{essentially simple} if the graph $\Universal{G}$ is simple, and \EMPH{essentially 3-connected} if $\Universal{G}$ is 3-connected \cite{m-cpmec-97,m-cpmpt-97,mr-tvrmt-98,ms-bns-03,gl-tmswo-14}.  We emphasize that essential simplicity and essential 3-connectivity are features of \emph{embeddings}, not abstract graphs; Figure~\ref{F:torus-graph} shows an essentially simple, essentially 3-connected toroidal embedding of a graph that is neither simple nor 3-connected.  

\begin{figure}[htb]
\centering
\raisebox{-0.5\height}{\includegraphics[scale=0.66]{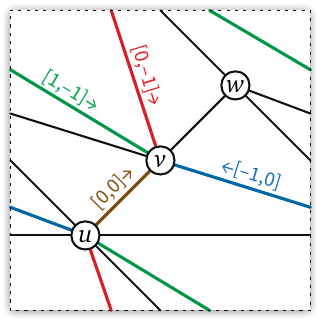}}\qquad
\raisebox{-0.5\height}{\includegraphics[scale=0.33]{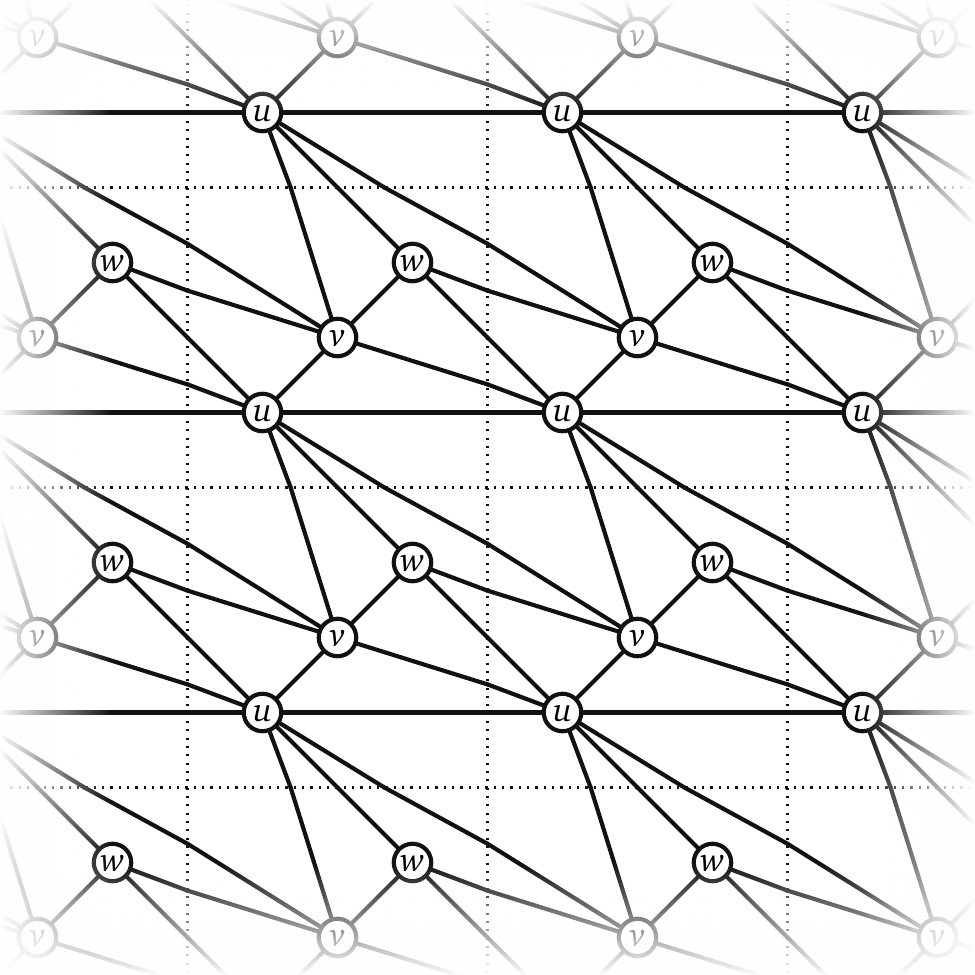}}\qquad
\raisebox{-0.5\height}{\includegraphics[scale=0.66]{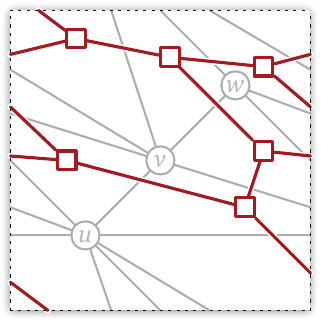}}
\caption{An essentially simple, essentially 3-connected geodesic graph embedding on the square flat torus (showing the homology vectors of all four darts from $u$ to $v$), a small portion of its universal cover, and a geometric dual embedding.}
\label{F:torus-graph}
\end{figure}

\subsection{Homology and Circulations}

For any drawing $\Gamma$ of a graph $G$ on the square flat torus $\Torus_\square$, we associate a \EMPH{homology vector}~$[d]_\Gamma \in \Z^2$ with each dart $d$, which records how the dart crosses the boundary edges of the unit square.  Specifically, the first coordinate of $[d]_\Gamma$ is the number of times the curve $\Gamma(d)$ crosses the vertical boundary rightward, minus the number of times $\Gamma(d)$ crosses the vertical boundary leftward; and the second coordinate of~$[d]_\Gamma$ is the number of times $\Gamma(d)$ crosses the horizontal boundary upward, minus the number of times $\Gamma(d)$ crosses the horizontal boundary downward.  In particular, reversing a dart negates its homology vector: $[e^+]_\Gamma = -[e^-]_\Gamma$.  Again, see Figure~\ref{F:torus-graph}.  For drawings on any other flat torus~$\Torus_M$, homology vectors are similarly defined by how darts cross the boundary of the fundamental parallelogram $\lozenge_M$.  

The (integer) \EMPH{homology class} $[\gamma]_\Gamma$ of a directed cycle $\gamma$ in $G$, with respect to the drawing $\Gamma$, is the sum of the homology vectors of its forward darts.  A cycle is \emph{contractible} in $\Gamma$ if its homology class is $\colvec{0}{0}$ and \emph{essential} otherwise.  In particular, the boundary cycle of each face of $\Gamma$ is contractible.

A \EMPH{circulation} $\phi$ in $G$ is a function from the darts of $G$ to the reals, such that~$\phi(\arc{p}{q}) = -\phi(\arc{q}{p})$ for every dart $\arc{p}{q}$ and 
\(
	\sum_{\arc{p}{q}} \phi(\arc{p}{q}) = 0
\)
for every vertex $p$.  We represent circulations by column vectors in $\Real^E$, indexed by the edges of $G$, where $\phi_e = \phi(e^+)$.  Let \EMPH{$\Lambda_\Gamma$} denote the $2\times E$ matrix whose columns are the homology vectors of the reference darts in~$G$.  The (real) homology class of a circulation is the matrix-vector product
\[
	[\phi]_\Gamma ~=~ \Lambda_\Gamma \phi ~=~ \sum_{e\in E} \phi(e^+)\cdot [e^+]_\Gamma.
\]
(This identity directly generalizes our earlier definition of the homology class $[\gamma]_\Gamma$ of a cycle~$\gamma$.)

We will omit the subscript $\Gamma$ from our homology notation when the drawing is clear from context.

\subsection{Homotopy}

Two closed curves $\gamma$ and $\gamma'$ on a torus $\Torus$ are \EMPH{homotopic} if one can be continuously deformed into the other, or more formally, if there is a continuous family $(\gamma_t)_{t\in[0,1]}$ of closed curves~$\gamma_t\colon S^1\to\Torus$ such that $\gamma_0=\gamma$ and $\gamma_1=\gamma'$.  Similarly, two drawings of the same graph~$G$ on the same flat torus $\Torus$ are homotopic if one can be continuously deformed into the other through drawings of the same graph.  The continuous family of cycles or drawings is called a \EMPH{homotopy}.

Two closed curves on any torus are homotopic if and only if they have the same homology class.  The following lemma characterizes when two drawings are homotopic; a similar characterization of \emph{isotopic embeddings} was proved by Ladegaillerie \cite{l-ctp1c-74a,l-ctp1c-74b,l-cdp1c-84,cm-tgis-14}.\footnote{Two embeddings are isotopic if and only if they are connected by a continuous family of embeddings.  Results of Ladegallierie \cite[Theorem~3.1]{l-cdp1c-84} and Mohar~\cite[p.~42]{m-eig-88} imply that two essentially simple, essentially 3-connected embeddings of the same graph are isotopic if and only if they are homotopic.}

\begin{lemma}
\label{L:homotopy}
Two drawings $\Gamma$ and $\Gamma'$ of the same graph $G$ on the same flat torus~$\Torus$ are homotopic if and only if every cycle in $G$ has the same homology class in both drawings.
\end{lemma}

\begin{proof}
Suppose $\Gamma$ and $\Gamma'$ are homotopic.  For any cycle $\gamma$ in $G$, the homotopy from $\Gamma$ to~$\Gamma'$ restricts to a homotopy from $\Gamma(\gamma)$ to $\Gamma'(\gamma)$.  It follows that $\gamma$ has the same homology class in both drawings.

Conversely, suppose $[\gamma]_\Gamma = [\gamma]_{\Gamma'}$ for every cycle $\gamma$ in $G$.  We construct a homotopy from $\Gamma$ to $\Gamma'$ as follows.  Let $T$ be any spanning tree of $G$.  We first continuously deform $\Gamma$ to an intermediate geodesic drawing $\Gamma_*$ by contracting $T$ to a point, translating that point to some fixed location $p$, and then deforming the non-tree edges of $G$ to geodesic loops.  We can similarly homotope $\Gamma'$ to an intermediate geodesic drawing~$\Gamma'_*$.  To complete the proof, it suffices to show that $\Gamma_*$ and $\Gamma'_*$ are identical drawings.

Let $e$ be an arbitrary edge of $G$.  If $e$ is in the spanning tree $T$, then the image of $e$ in both drawings is the point $p$.  Otherwise, let $\gamma_e$ be the unique fundamental cycle in $T+e$.  The homotopy from~$\Gamma$ to $\Gamma_*$ preserves the homology class of $\gamma_e$.  Thus, $\Gamma_*(e) = \Gamma_*(\gamma_e)$ is a geodesic cycle through $p$ with homology class $[\gamma_e]_\Gamma$.  Similarly, $\Gamma'_*(e)$ is a geodesic cycle through $p$ in the same homology class.  Each homology class of cycles on $\Torus$ contains a unique closed geodesic through any point.  In all cases, we conclude that $\Gamma_*(e) = \Gamma'_*(e)$.
\end{proof}

\subsection{Geodesic Drawings and Embeddings}
\label{sec:embedrep}
%


Recall that a graph drawing is \emph{geodesic} if it maps edges to geodesics.  Any geodesic drawing~$\Gamma\colon G \to \Torus_M$ is uniquely determined by its \EMPH{coordinate representation}, which consists of a coordinate vector $\seq{p}_\Gamma\in \lozenge_M$ for each vertex $p$, together with the homology vector~$[e^+]_\Gamma \in \Z^2$ of each edge $e$.  (Again, we will omit the subscript $\Gamma$ when the drawing is clear from context.) 

The \EMPH{displacement vector $\Delta_d$} of any dart $d$ (with respect to a fixed drawing $\Gamma$) is the difference between the head and tail coordinates of any lift of $d$ in the universal cover $\Universal{\Gamma}$.  Displacement vectors can be equivalently defined in terms of vertex coordinates, homology vectors, and the shape matrix $M$ as follows:
\[
	\Delta_{\arc{p}{q}} := \seq{q} - \seq{p} + M\, [{\arc{p}{q}}].
\]
Reversing a dart negates its displacement: $\Delta_{\arc{q}{p}} = -\Delta_{\arc{p}{q}}$.
We sometimes write~$\Dx_d$ and~$\Dy_d$ to denote the first and second coordinates of $\Delta_d$.  The \EMPH{displacement matrix}~$\Delta = \Delta(\Gamma)$ of a geodesic drawing $\Gamma$ is the $2\times E$ matrix whose columns are the displacement vectors of the reference darts.  Every geodesic drawing on $\Torus_M$ is determined \emph{up to translation} by its displacement matrix.  Finally, we let \EMPH{$\abs{e}$} denote the length of any edge $e$, or equivalently, the Euclidean length of the displacement vectors $\Delta_{e^\pm}$.

On the \emph{square} flat torus, the integer homology class of any directed cycle is also equal to the sum of the \emph{displacement} vectors of its darts:
\[
	[\gamma] ~=~ \sum_{\arc{p}{q}\in\gamma} [\arc{p}{q}] ~=~ \sum_{\arc{p}{q}\in\gamma} \Delta_{\arc{p}{q}}.
\]
In particular, the total displacement of any contractible cycle is zero, as expected. Extending this identity to circulations by linearity gives us the following useful lemma:

\begin{lemma}
\label{L:harmonic}
Let $\Gamma\colon G\to \Torus_\square$ be any geodesic drawing of a graph $G$ on the square flat torus, and let $\Delta = \Delta(\Gamma)$ be the displacement matrix of~$\Gamma$.  For every circulation~$\phi$ in $G$, we have~$\Delta \phi = \Lambda_\Gamma \phi = [\phi]_\Gamma$.
\end{lemma}
%
%

The following lemma is essentially the converse of Lemma~\ref{L:harmonic}.

\begin{lemma}
\label{L:draw-on-square}
Fix an essentially simple, essentially 3-connected embedding $\Gamma\colon G\to \Torus_\square$ and a $2 \times E$ matrix~$\Delta$.  Suppose for every directed cycle (and therefore every circulation) $\phi$ in~$G$, we have $\Delta\phi = \Lambda_\Gamma\phi = [\phi]_\Gamma$.  Then $\Delta$ is the displacement matrix of a geodesic drawing $\Gamma'\colon {G\to \Torus_\square}$ that is homotopic to $\Gamma$.
\end{lemma}

\begin{proof}
We can construct a non-standard coordinate representation of a drawing~$\Gamma'$ whose displacement matrix is $\Delta$ as follows.  Fix an arbitrary spanning tree $T$ of~$G$, rooted at an arbitrary vertex $r$.  Define $\seq{r} = \colvec{0}{0}$, and for any other vertex $q$ with parent $p$ in $T$, define $\seq{q} = \seq{p} + \Delta_{\arc{p}{q}}$ and $[\arc{p}{q}] = [\arc{q}{p}] = \colvec{0}{0}$.  Finally, for every edge $e$ that is not in $T$, let~$\gamma^+_e$ denote the unique fundamental cycle in the subgraph $T+e$, directed so that it contains the reference dart $e^+$, and define $[e^+] = [\gamma^+_e]_\Gamma$ and $[e^-] = -[\gamma^+_e]_\Gamma$.

The vectors $\seq{p}$ and $[\arc{p}{q}]$ we just defined do not necessarily fit our definition of coordinate representation; in particular, the coordinate vectors $\seq{p}$ can fall outside the unit square.  However, we can modify these vectors to fit our earlier definitions, intuitively by folding the coordinate vectors into the unit square, as follows.\footnote{See the appendix of Chambers \etal~\cite{celp-hmgt-21} and references therein for a discussion of equivalent non-standard coordinate representations.}  For any vector $u = \colvec{a}{b}$, let $\floor{u}$ denote the vector $\colvec{\floor{a}}{\floor{b}}$.  For each vertex~$p$ of $G$, define
\[
	\seq{p}_{\Gamma’} := \seq{p} - \floor{\seq{p}};
\]
we immediately have $\seq{p}_{\Gamma’} \in [0,1)^2$.  Similarly, for each dart $\arc{p}{q}$ of $G$, define
\[
	[\arc{p}{q}]_{\Gamma’} := [\arc{p}{q}] - \floor{\seq{p}} + \floor{\seq{q}}.
\]
Finally, let $\Gamma'\colon {G\to \Torus_\square}$ be the unique geodesic drawing of $G$ with coordinate vectors $\seq{p}_{\Gamma’}$ and homology vectors $[\arc{p}{q}]_{\Gamma’}$.

By construction, for every dart $\arc{p}{q}$, we have
\[
	\Delta_{\arc{p}{q}} 
		~=~ \seq{q} - \seq{p} + [\arc{p}{q}]
		~=~ \seq{q}_{\Gamma'} - \seq{p}_{\Gamma'} + [\arc{p}{q}]_{\Gamma’},
\]
so $\Delta$ is in fact the displacement matrix of $\Gamma'$.  For every fundamental cycle $\gamma = \gamma^+_e$, we immediately have
\[
	[\gamma]_{\Gamma'}
	~:=~
	\sum_{\arc{p}{q}\in \gamma} [\arc{p}{q}]_{\Gamma'}
	~=~
	\sum_{\arc{p}{q}\in \gamma} \Delta_{\arc{p}{q}}
	~=~
	[\gamma]_{\Gamma}.
\]
It follows by linearity that $[\phi]_{\Gamma'} = [\phi]_\Gamma$ for every circulation~$\phi$ in $G$.  Thus, Lemma \ref{L:homotopy} implies that~$\Gamma'$ is homotopic to $\Gamma$.
\end{proof}

\subsection{Equilibrium Stresses and Spring Embeddings}

A \EMPH{stress} in a geodesic torus drawing $\Gamma\colon G\to \Torus$ is a real vector $\omega \in \R^E$  indexed by the edges of $G$.  Unlike homology vectors, circulations, and displacement vectors, stresses can be viewed as \emph{symmetric} functions on the darts of~$G$.  An \EMPH{equilibrium stress} in $\Gamma$ is a stress $\omega$ that satisfies the following identity at every vertex $p$:
\[
	\sum_{\arc{p}{q}} \omega_{pq} \Delta_{\arc{p}{q}}
	=
	\colvec{0}{0}.
\]
Unlike Borcea and Streinu \cite{bs-lsppf-15,bs-mrpg-11,bs-pff-10}, we primarily consider \EMPH{positive} equilibrium stresses, where~$\omega_e>0$ for every edge $e$.  It may be helpful to imagine each stress coefficient $\omega_e$ as a linear spring constant; intuitively, each edge pulls its endpoints inward, with a force equal to the length of $e$ times the stress coefficient~$\omega_e$.

Recall that the linear map $M\colon \R^2\times \R^2$ associated with any nonsingular $2\times 2$ matrix induces a homeomorphism $\underline{M}\colon \Torus_\square \to \Torus_M$.  In particular, this homeomorphism transforms a geodesic embedding $\Gamma\colon G\to \Torus_\square$ with displacement matrix $\Delta$ into a geodesic embedding ${\underline{M}\circ \Gamma} \colon G\to\Torus_M$ with displacement matrix $M\Delta$.  We refer to the embedding $\underline{M}\circ \Gamma$ as the \EMPH{affine image} of $\Gamma$ on~$\Torus_M$.  Routine definition-chasing now implies the following lemma.

\begin{lemma}
\label{L:equ-affine}
Let $\Gamma\colon G\to\Torus_\square$ be a geodesic drawing on the square flat torus $\Torus_\square$.   If $\omega$ is an equilibrium stress for $\Gamma$, then $\omega$ is also an equilibrium stress for its affine image on any other flat torus~$\Torus_M$.
\end{lemma}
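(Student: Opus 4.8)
The plan is to verify the equilibrium condition directly from the definitions, exploiting the fact that the homeomorphism $\underline{M}$ transforms displacement matrices by left-multiplication. First I would recall from the discussion immediately preceding the statement that if $G$ has displacement matrix $\Delta$ on $\Torus_\square$, then its image under $\underline{M}$ on $\Torus_M$ has displacement matrix $M\Delta$. Since the displacement matrix records the reference darts columnwise, this means that each individual displacement vector transforms as $\Delta_{\arc{p}{q}} \mapsto M\,\Delta_{\arc{p}{q}}$; this follows either from the columnwise statement together with the sign convention $\Delta_{\arc{q}{p}} = -\Delta_{\arc{p}{q}}$, or directly from the formula $\Delta_{\arc{p}{q}} = \seq{q} - \seq{p} + M[\arc{p}{q}]$ by observing how vertex coordinates and the shape matrix scale.

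The key step is then a one-line linear-algebra computation at each vertex $p$. Writing $\Delta'_{\arc{p}{q}} = M\,\Delta_{\arc{p}{q}}$ for the displacement vectors of the image graph, and using that the stress coefficients $\omega_{pq}$ are attached symmetrically to edges and are therefore unchanged by the homeomorphism, I would compute
\[
	\sum_{\arc{p}{q}} \omega_{pq}\, \Delta'_{\arc{p}{q}}
	= \sum_{\arc{p}{q}} \omega_{pq}\, M\,\Delta_{\arc{p}{q}}
	= M \sum_{\arc{p}{q}} \omega_{pq}\, \Delta_{\arc{p}{q}}
	= M \colvec{0}{0}
	= \colvec{0}{0},
\]
where the third equality uses that $\omega$ is an equilibrium stress for $G$ on $\Torus_\square$. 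Since this holds at every vertex $p$, the vector $\omega$ is an equilibrium stress for the image graph on $\Torus_M$, as claimed.

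There is no real obstacle here, which is exactly why the authors call the argument ``routine definition-chasing.'' The only points requiring any care are bookkeeping ones: confirming that the stress vector $\omega \in \R^E$ is genuinely intrinsic to the abstract edge set and so carries over unchanged under the homeomorphism (unlike displacement and homology vectors, which are geometric), and confirming that the summation at $p$ ranges over the same set of darts in both graphs because $\underline{M}$ is a homeomorphism preserving the combinatorial structure of the embedding. The essential content is simply that $M$ is a fixed linear map, so it commutes with the weighted sum defining equilibrium and sends the zero vector to the zero vector; positivity of $\omega$ plays no role and is preserved trivially. One could alternatively phrase the whole proof as the observation that the per-vertex equilibrium map $\omega \mapsto \big(\sum_{\arc{p}{q}} \omega_{pq}\Delta_{\arc{p}{q}}\big)_p$ becomes $M$ applied to each vertex block, so its kernel is unchanged.
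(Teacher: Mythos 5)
Your proof is correct and is exactly the ``routine definition-chasing'' the paper alludes to: the paper omits the argument entirely, relying on the preceding observation that the image of $G$ on $\Torus_M$ has displacement matrix $M\Delta$, and your computation $\sum \omega_{pq} M\Delta_{\arc{p}{q}} = M\sum \omega_{pq}\Delta_{\arc{p}{q}} = M\colvec{0}{0}$ is precisely the intended verification. No gaps.
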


Our results rely on the following natural generalization of Tutte's spring embedding theorem to flat torus embeddings.

\begin{theorem}[Colin de Verdière~\cite{c-crgtd-91}; see also \cite{d-eppgc-04, l-dafe-04, ggt-domam-06, hs-seshm-15}]
\label{Th:tutte}
Let $\Gamma$ be any essentially simple, essentially 3-connected embedding on any flat torus $\Torus$, and let $\omega$ be any positive stress on the edges of~$\Gamma$.  Up to translation, there is a unique drawing homotopic to $\Gamma$ that is in equilibrium with respect to $\omega$, and that drawing is an embedding with convex faces.
\end{theorem}

Lemma \ref{L:draw-on-square} and Theorem \ref{Th:tutte} immediately imply the following sufficient condition for a displacement matrix to describe a geodesic \emph{embedding} on the square torus.  

\begin{corollary}
\label{C:embed-on-square}
Fix an essentially simple, essentially 3-connected embedding $\Gamma\colon G\to \Torus_\square$, a $2 \times E$ matrix~$\Delta$.  Suppose for every directed cycle $\phi$ in~$G$, we have $\Delta\phi = \Lambda_\Gamma\phi = [\phi]_\Gamma$.  Then~$\Delta$ is the displacement matrix of a geodesic drawing $\Gamma'\colon G\to \Torus_\square$ that is homotopic to~$\Gamma$.  If in addition $\Gamma'$ has a positive equilibrium stress, then $\Gamma'$ is an embedding.
\end{corollary}
%
%

Following Steiner and Fischer~\cite{sf-ppc2m-04} and Gortler, Gotsman, and Thurston \cite{ggt-domam-06}, given the coordinate representation of any geodesic embedding $\Gamma$ on the square flat torus, with any positive stress vector $\omega>0$, we can compute an equilibrium embedding homotopic to~$\Gamma$ by solving the linear system
\begin{equation}
	\sum_{\arc{p}{q}} \omega_{pq} \big(\seq{q} - \seq{p} + [{\arc{p}{q}}]\big)
	=
	\colvec{0}{0} \qquad\text{for every vertex $q$}
	\label{Eq:tutte}
\end{equation}
for the vertex locations $\seq{p}$, treating the homology vectors $[{\arc{p}{q}}]$ as constants.  Alternatively, Corollary~\ref{C:embed-on-square} implies that we can compute the displacement vectors of an equilibrium embedding homotopic to $\Gamma$ directly, by solving the linear system
\begin{equation*}
	\begin{aligned}
		\sum_{\arc{p}{q}} \omega_{pq} \Delta_{\arc{p}{q}}
		&= \colvec{0}{0}
		\qquad\text{for every vertex $q$}
	\\
		\sum_{\Left(d) = f} \Delta_d
		&= \colvec{0}{0}
		\qquad\text{for every face $f$}
	\\
		\sum_{d \in \gamma_1} \Delta_d &= [\gamma_1]
	\\
		\sum_{d \in \gamma_2} \Delta_d &= [\gamma_2]
	\end{aligned}
\end{equation*}
where $\gamma_1$ and $\gamma_2$ are any two directed cycles with independent non-zero homology classes.

\subsection{Duality and Reciprocality}
\label{SS:duality}

Two embeddings $\Gamma\colon G\to\Torus$ and $\Gamma^*\colon G^*\to\Torus$ are \EMPH{geometric duals} if there is a bijection between the edges of $G$ and $G^*$, such that each edge $e$ of $\Gamma$ crosses only the corresponding edge $e^*$ of $\Gamma^*$, each vertex $v$ of $\Gamma$ lies inside a unique face $v^*$ of $\Gamma^*$, and each face $f$ of $\Gamma$ contains a unique vertex $f^*$ of $\Gamma^*$.  Each dart $d$ in $G$ has a corresponding dart $d^*$ in~$G^*$, defined by setting $\Head(d^*) = \Left(d)^*$ and $\Tail(d^*) = \Right(d^*)$; intuitively, the dual of a dart in $\Gamma$ is obtained by rotating the dart counterclockwise.  More generally, we call two embeddings $\Gamma$ and $\Gamma^*$ \EMPH{duals} if $\Gamma^*$ is homotopic to a geometric dual of $\Gamma$.  We emphasize that if $\Gamma$ and $\Gamma^*$ are dual embeddings, an edge $e$ in $\Gamma$ need not intersect the corresponding dual edge $e^*$ in $\Gamma^*$.

Two dual geodesic embeddings $\Gamma$ and $\Gamma^*$ on the same flat torus $\Torus$ are \EMPH{reciprocal} if every edge $e$ of~$\Gamma$ is orthogonal to its dual edge $e^*$ in $\Gamma^*$.  Again, we emphasize that $e$ and $e^*$ may not intersect.  We call a single embedding $\Gamma$ \EMPH{reciprocal} if it is reciprocal to some dual embedding $\Gamma^*$. 


It will prove convenient to treat vertex coordinates, displacement vectors, homology vectors, and circulations in any dual embedding~$\Gamma^*$ as \emph{row} vectors.  For any vector $v\in\Real^2$ we define \EMPH{$v^\perp$}${} := (Jv)^T$, where $\EMPH{J} := \begin{psmallmatrix} 0&-1\\1&~0 \end{psmallmatrix}$ is the matrix for a $90^\circ$ counterclockwise rotation.  Note that $J^T = J^{-1} = -J$.  Similarly, for any $2\times n$ matrix~$A$, we define \EMPH{$A^\perp$}${} := (JA)^T = -A^T J$.

\subsection{Coherent Subdivisions}

Let $\Gamma\colon G\to\Torus_M$ be a fixed geodesic embedding, and fix arbitrary real weights \EMPH{$\pi_p$} for every vertex $p$ of~$G$.  Let $\arc{p}{q}$, $\arc{p}{r}$, and $\arc{p}{s}$ be three consecutive darts around a common tail $p$ in counterclockwise order, so that $\Left(\arc{p}{q}) = \Right(\arc{p}{r})$ and $\Left(\arc{p}{r}) = \Right(\arc{p}{s})$.  We call the edge $pr$ \EMPH{locally Delaunay} if the following determinant is positive:
\begin{equation}
	\begin{vmatrix}
		\Dx_{\arc{p}{q}} & \Dy_{\arc{p}{q}} &
			\Half\norm{\Delta_{\arc{p}{q}}}^2 + \pi_p - \pi_q\\[0.5ex]
		\Dx_{\arc{p}{r}} & \Dy_{\arc{p}{r}} &
			\Half\norm{\Delta_{\arc{p}{r}}}^2 + \pi_p - \pi_r\\[0.5ex]
		\Dx_{\arc{p}{s}} & \Dy_{\arc{p}{s}} &
			\Half\norm{\Delta_{\arc{p}{s}}}^2 + \pi_p - \pi_s\\
	\end{vmatrix} > 0.
	\label{Eq:Delaunay}
\end{equation}
This inequality follows by elementary row operations and cofactor expansion from the standard determinant test for appropriate lifts of the vertices $p,q,r,s$ to the universal cover:
\begin{equation}
	\begin{vmatrix}
		1 & x_p & y_p & \Half(x_p^2 + y_p^2) - \pi_p  \\[0.5ex]
		1 & x_q & y_q & \Half(x_q^2 + y_q^2) - \pi_q  \\[0.5ex]
		1 & x_r & y_r & \Half(x_r^2 + y_r^2) - \pi_r  \\[0.5ex]
		1 & x_s & y_s & \Half(x_s^2 + y_s^2) - \pi_s  \\
	\end{vmatrix}
	>
	0.
	\label{Eq:Delaunay2}
\end{equation}
(The factor~$1/2$ simplifies our later calculations, and is consistent with Maxwell's construction of polyhedral liftings and reciprocal diagrams.)  Similarly, we say that an edge is \EMPH{locally flat} if the corresponding determinant is zero.  Finally, $\Gamma$ is the \EMPH{weighted Delaunay complex} of its vertices if every edge of $\Gamma$ is locally Delaunay and every diagonal of every non-triangular face is locally flat.

Equivalently, $\Gamma$ is the weighted Delaunay complex of its vertices if and only if $\Gamma$ is the projection of the weighted Delaunay complex of the lift $\pi_M^{-1}(V)$ of its vertices $V$ to the universal cover.  Results of Bobenko and Springborn \cite{bs-dloss-07} imply that any finite set of weighted points on any flat torus has a unique weighted Delaunay complex.  We emphasize that weighted Delaunay complexes are \emph{not} necessarily either simple or triangulations; however, every weighted Delaunay complex on any flat torus is both essentially simple and essentially 3-connected.  The dual \EMPH{weighted Voronoi diagram} of a set of points $P\subset \Torus_M$, also known as its \emph{power diagram} \cite{a-pdpaa-87,ai-gravd-88}, can be defined similarly by projection from the universal cover. 

Finally, a geodesic torus embedding is \EMPH{coherent} if it is the weighted Delaunay complex of its vertices, with respect to some vector of weights.

\section{Reciprocal Implies Equilibrium}

Fix two dual geodesic embeddings $\Gamma \colon G\to \Torus_M$ and $\Gamma^* \colon G^* \to \Torus_M$.  We write $\abs{e}$ to denote the length of any edge $e$ in $\Gamma$ and $\abs{e^*}$ to denote the length of the corresponding dual edge~$e^*$ in $\Gamma^*$.

\begin{lemma}
\label{L:rec-implies-equ}
Suppose $\Gamma$ and $\Gamma^*$ are reciprocal geodesic embeddings on $\Torus_M$.  Then the vector~$\omega$ defined by $\omega_e = \abs{e^*}/\abs{e}$ is an equilibrium stress for $\Gamma$, and  symmetrically, the vector $\omega^*$ defined by $\omega^*_{e^*} = 1/\omega_e = \abs{e}/\abs{e^*}$ is an equilibrium stress for $\Gamma^*$.
\end{lemma}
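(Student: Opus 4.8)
The plan is to verify the equilibrium condition $\sum_{\arc{p}{q}} \omega_{pq}\,\Delta_{\arc{p}{q}} = \colvec{0}{0}$ at each vertex $p$ of $G$ by transporting it, through the reciprocal correspondence, into the purely topological statement that the boundary of the corresponding dual face $p^*$ is a contractible cycle. The single identity that drives everything is
\[
	\Delta_{(\arc{p}{q})^*} = \omega_{pq}\,\Delta_{\arc{p}{q}}^{\perp},
\]
relating the (row) displacement of a dual dart to the (column) displacement of its primal dart. Once this identity is in hand the lemma falls out quickly, so the real content lies in establishing it with a globally consistent orientation.

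First I would establish the identity edge by edge. Its magnitude is exactly the weight definition: since $\Delta_{\arc{p}{q}}^{\perp} = (J\,\Delta_{\arc{p}{q}})^T$ and $J$ is a rotation, $\norm{\Delta_{\arc{p}{q}}^{\perp}} = \norm{\Delta_{\arc{p}{q}}} = \abs{e}$, while $\norm{\Delta_{(\arc{p}{q})^*}} = \abs{e^*}$, so the two sides have equal length precisely because $\omega_e = \abs{e^*}/\abs{e}$. Reciprocality, i.e.\ orthogonality of $e$ and $e^*$, forces $\Delta_{(\arc{p}{q})^*}$ to be parallel to $\Delta_{\arc{p}{q}}^{\perp}$, so the two sides agree up to sign. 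To pin the sign to $+$ I would invoke that $G^*$ is a genuine \emph{dual} embedding, not merely an orthogonal one: the dual dart satisfies $\Head(d^*) = \Left(d)^*$ and $\Tail(d^*) = \Right(d)^*$, so in the universal cover $d^*$ runs from the face on the right of $d$ to the face on the left of $d$, which is exactly the counterclockwise-perpendicular direction carried by $\Delta_d^{\perp}$. Because ``left'' and ``right'' are globally consistent in an oriented embedding, the sign is $+$ for every dart simultaneously.

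With the identity available, fix a vertex $p$ and list the darts $\arc{p}{q}$ leaving $p$ in clockwise order; for consecutive darts $\Left(\arc{p}{q}) = \Right(\arc{p}{r})$, hence $\Head((\arc{p}{q})^*) = \Tail((\arc{p}{r})^*)$, so the dual darts concatenate head-to-tail into the boundary walk of the dual face $p^*$. This boundary cycle is contractible, so its total displacement vanishes (on any $\Torus_M$ a contractible cycle has homology class $\colvec{0}{0}$, hence zero total displacement):
\[
	\sum_{\arc{p}{q}} \Delta_{(\arc{p}{q})^*} = \colvec{0}{0}.
\]
Substituting the identity gives $\sum_{\arc{p}{q}} \omega_{pq}\,\Delta_{\arc{p}{q}}^{\perp} = \colvec{0}{0}$; transposing and using that $J$ is invertible (so $(J\,v)^T = \colvec{0}{0}$ forces $v = \colvec{0}{0}$) yields $\sum_{\arc{p}{q}} \omega_{pq}\,\Delta_{\arc{p}{q}} = \colvec{0}{0}$, which is equilibrium at $p$. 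The symmetric statement needs no new work: reciprocality is a symmetric relation, $\omega^*_{e^*} = \abs{e}/\abs{e^*} = 1/\omega_e$, and the identical face-closure argument applied to the vertices of $G^*$ (equivalently, to the contractible boundary cycles of the faces of $G$) gives equilibrium for $G^*$. The one step that demands care, and hence the main obstacle, is the orientation bookkeeping in the edgewise identity: orthogonality alone fixes $\Delta_{(\arc{p}{q})^*}$ only up to sign, and the computation collapses unless that sign is the \emph{same} for every dart; it is the hypothesis that $G^*$ is dual to $G$ (homotopic to the natural dual), realized cleanly in the universal cover where $\widetilde{G}$ and $\widetilde{G^*}$ are honest reciprocal straight-line plane graphs, that guarantees the single global counterclockwise rotation.
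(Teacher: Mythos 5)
Your proposal is correct and follows essentially the same route as the paper's proof: establish the edgewise identity $\Delta^*_{(\arc{p}{q})^*} = \omega_{pq}\,\Delta_{\arc{p}{q}}^{\perp}$, observe that the darts leaving a vertex $p$ dualize to the contractible boundary cycle of the face $p^*$ so their dual displacements sum to zero, and then undo the rotation $J$. The only difference is one of emphasis: the paper asserts the edgewise identity as immediate, whereas you carefully justify the sign via the dual-embedding orientation convention, which is a reasonable elaboration rather than a new idea.
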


\begin{proof}
Let $\omega_e = \abs{e^*}/\abs{e}$ and $\omega^*_{e^*} = 1/\omega_e = \abs{e}/\abs{e^*}$ for each edge $e$.  Let $\Delta$ denote the displacement matrix of $\Gamma$, and let~$\Delta^*$ denote the (transposed) displacement matrix of $\Gamma^*$.  We immediately have $\Delta^*_{e^*} = \omega_e \Delta_{e}^\perp$ for every edge $e$ of $\Gamma$.  The darts leaving each vertex $p$ of~$\Gamma$ dualize to a facial cycle around the corresponding face $p^*$ of $\Gamma^*$, and thus
\[
	\left(\sum_{q \colon pq\in E} \omega_{pq} \Delta_{\arc{p}{q}} \right)^\perp
	= \sum_{q \colon pq\in E} \omega_{pq} \Delta_{\arc{p}{q}}^\perp
	= \sum_{q \colon pq\in E} \Delta^*_{(\arc{p}{q})^*}
	= \rowvec{0}{0}.
\]
We conclude that $\omega$ is an equilibrium stress for $\Gamma$, and thus (by swapping the roles of $\Gamma$ and~$\Gamma^*$) that~$\omega^*$ is an equilibrium stress for $\Gamma^*$.
\end{proof}

A \emph{positive} stress vector $\omega$ is a \EMPH{reciprocal stress} for $\Gamma$ if there is a reciprocal embedding $\Gamma^*$ on the same flat torus such that $\omega_e = \abs{e^*}/\abs{e}$ for each edge $e$.  Thus, a geodesic toroidal embedding is reciprocal if and only if it has a reciprocal stress, and Lemma~\ref{L:rec-implies-equ} implies that every reciprocal stress is an equilibrium stress.  The following simple construction shows that the converse of Lemma~\ref{L:rec-implies-equ} is false.

\begin{theorem}
\label{Th:equ-not-recip}
Not every positive equilibrium stress for a geodesic toroidal embedding $\Gamma$ is a reciprocal stress.  More generally, not every equilibrium embedding on $\Torus$ is reciprocal/coherent on~$\Torus$.
\end{theorem}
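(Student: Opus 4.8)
The plan is to exhibit a single explicit geodesic graph on the square flat torus $\Torus_\square$ that carries a positive equilibrium stress (indeed a whole open cone of them) but admits no reciprocal diagram at all; this settles both sentences at once, since a graph with \emph{no} reciprocal stress is in particular one not all of whose equilibrium stresses are reciprocal. For the example I would take the standard one-vertex triangulation of $\Torus_\square$: a single vertex $v$ with three loops $a,b,c$ whose reference darts have homology (hence displacement) vectors $\colvec{1}{0}$, $\colvec{0}{1}$, and $\colvec{1}{1}$. Its universal cover is the triangular lattice, so the embedding is genuinely essentially simple and essentially $3$-connected. Because every edge is a loop, the equilibrium condition at $v$ holds automatically --- each loop contributes $\omega_e\Delta_e+\omega_e(-\Delta_e)=\colvec{0}{0}$ --- so \emph{every} positive $\omega\in\Real^3$ is an equilibrium stress, and $G$ is an equilibrium graph.

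Next I would pin down exactly what reciprocality demands on $\Torus_\square$. Suppose $\omega$ were a reciprocal stress, realized by a reciprocal embedding $G^*$ on the \emph{same} torus. As in the proof of Lemma~\ref{L:rec-implies-equ}, the (transposed) displacements of $G^*$ are then forced to be $\Delta^*_{e^*}=\omega_e\Delta_e^\perp$. But $G^*$ is itself a geodesic graph on $\Torus_\square$, so Lemma~\ref{L:harmonic} applies to it: the total displacement of any circulation in $G^*$ equals its homology class. I would feed the two canonical circulations of Lemma~\ref{L:cocirc} into this identity. The rows $\lambda_1,\lambda_2$ of $\Lambda$ are cocirculations of $G$, i.e.\ circulations of $G^*$, with homology classes $[\lambda_1]^*=\rowvec{0}{1}$ and $[\lambda_2]^*=\rowvec{-1}{0}$; substituting $\Delta^*_{e^*}=\omega_e\Delta_e^\perp$ collapses the two vector equations into the single matrix identity
\[
	\Lambda\,\operatorname{diag}(\omega)\,\Delta^\perp = \begin{psmallmatrix} 0 & 1 \\ -1 & 0 \end{psmallmatrix} = J^T .
\]
Thus any reciprocal stress on $\Torus_\square$ must satisfy this ($2\times2$) homology constraint; only this necessary direction is needed below.

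Finally I would evaluate the left-hand side for the example and watch it fail. Since $v$ is the only vertex we have $\Delta=\Lambda=\begin{psmallmatrix}1&0&1\\0&1&1\end{psmallmatrix}$, and a direct computation gives
\[
	\Lambda\,\operatorname{diag}(\omega)\,\Delta^\perp
	= \begin{psmallmatrix} -\omega_c & \omega_a+\omega_c \\ -\omega_b-\omega_c & \omega_c \end{psmallmatrix}.
\]
Its top-left entry is $-\omega_c$, which the constraint forces to equal $0$; but $\omega_c>0$ for a positive stress, a contradiction. Hence no positive equilibrium stress on this $G$ is reciprocal, so $G$ is an equilibrium graph that is not reciprocal, and by the reciprocal/coherent equivalence (Theorem~\ref{T:recip-implies-del} and Lemma~\ref{L:coh-implies-recip}) not coherent either.

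The genuinely substantive step is the middle one: extracting the identity $\Lambda\,\operatorname{diag}(\omega)\,\Delta^\perp=J^T$ from the mere existence of a reciprocal $G^*$. Everything hinges on recognizing that $G^*$ lives on the \emph{same} square torus, so displacement must equal homology with no scaling permitted, and on using Lemma~\ref{L:cocirc} to identify the homology classes of the two dual essential cycles --- it is precisely this rigidity, absent in the plane, that obstructs reciprocality. Once the identity is in hand the counterexample is a one-line computation, and the only remaining bookkeeping is to confirm the three-loop graph is a bona fide essentially $3$-connected geodesic embedding so that it legitimately counts as an equilibrium graph. One can make the obstruction even starker: using the equilibrium identity $\Delta\,\operatorname{diag}(\omega)B^T=\colvec{0}{0}$ to delete the vertex-coordinate part of $\Delta=\Lambda+XB$ rewrites the constraint as the symmetric condition $\sum_e\omega_e\Delta_e\Delta_e^T=I$, whose off-diagonal entry $\sum_e\omega_e\,\Dx_e\,\Dy_e$ cannot vanish once some edge has a positive-slope displacement and all weights are positive.
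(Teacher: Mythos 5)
Your proof is correct, and it reaches the conclusion by a genuinely different route than the paper, even though the counterexample is of the same species (a one-vertex triangulation of $\Torus_\square$, with every positive stress automatically in equilibrium because each loop's two darts cancel). The paper's proof never touches the homology constraint: it observes that the weighted Delaunay graph of a single point is the same for all weights, so reciprocality would force $G_1$ to be the \emph{unweighted} intrinsic Delaunay triangulation, which fails because the longest edge violates the empty-circle test; it then adds a separate $k\times k$ covering family $G_k$ with a summation/cancellation argument to show the phenomenon is not an artifact of having one vertex. You instead derive the necessary condition $\Lambda\,\Omega\,\Delta^\perp=-J$ by feeding the cocirculations of Lemma~\ref{L:cocirc} into Lemma~\ref{L:harmonic} applied to the putative $G^*$ --- this is precisely the necessity half of Lemma~\ref{L:square-abg}, which the paper only proves later in Section~\ref{S:reciprocal} --- and then kill it with the entry $\gamma=\sum_e\omega_e\Dx_e\Dy_e=\omega_c>0$. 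What your approach buys is a uniform, computation-free-of-geometry obstruction that explains \emph{why} no positive stress works (the covariance matrix cannot be the identity) and that only uses Section~2 machinery; what the paper's approach buys is independence from the reciprocal-stress characterization (useful since Theorem~\ref{Th:equ-not-recip} is stated before Section~\ref{S:reciprocal}) plus the richer multi-vertex family. Two small caveats: your closing aside is loose --- $\gamma$ can certainly vanish when positive- and negative-slope edges coexist, so the claim ``cannot vanish once some edge has a positive-slope displacement'' needs the additional hypothesis that no edge has negative-slope displacement (true in your example, where $\Dx_e\Dy_e\ge 0$ for all three edges); and the identity $\Delta^*_{e^*}=\omega_e\Delta_e^\perp$ (rather than $-\omega_e\Delta_e^\perp$) silently uses the orientation convention $\Head(d^*)=\Left(d)^*$, exactly as the paper does in Lemma~\ref{L:rec-implies-equ}, so you are on safe ground there.
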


\begin{proof}
Let $\Gamma_1$ be the geodesic triangulation in the flat square torus $\Torus_\square$ with a single vertex~$p$ and three edges, whose reference darts have displacement vectors $\colvec{1}{0}$, $\colvec{1}{1}$, and $\colvec{2}{1}$.  Every stress $\omega$ in~$\Gamma$ is an equilibrium stress, because the forces applied by each edge cancel out.  The weighted Delaunay complex of a single point is identical for all weights, so it suffices to verify that $\Gamma_1$ is not an intrinsic Delaunay triangulation.  We easily observe that the longest edge of $\Gamma_1$ is not Delaunay.  See Figure~\ref{F:incoherent}. 

\begin{figure}[ht]
\centering\includegraphics[scale=0.6]{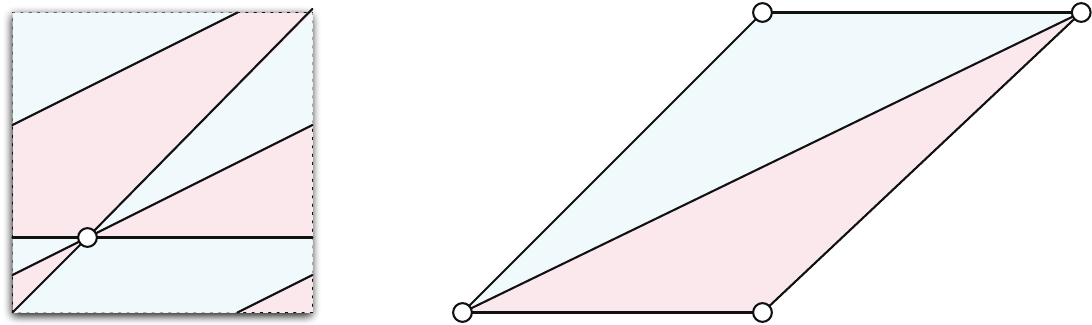}
\caption{A one-vertex triangulation $\Gamma_1$ on the square flat torus, and a lift of its faces to the universal cover.  Every stress in $\Gamma_1$ is an equilibrium stress, but $\Gamma_1$ is not a (weighted) intrinsic Delaunay triangulation.}
\label{F:incoherent}
\end{figure}

More generally, for any positive integer $k$, let $\Gamma_k$ denote the $k\times k$ covering of $\Gamma_1$.  The vertices of $\Gamma_k$ form a regular $k\times k$ square toroidal lattice, and the edges of $\Gamma_k$ fall into three parallel families, with displacement vectors $\colvec{1/k}{0}$,  $\colvec{2/k}{1/k}$, and $\colvec{1/k}{1/k}$.  Every positive stress vector where all parallel edges have equal stress coefficients is an equilibrium stress.

For the sake of argument, suppose $\Gamma_k$ is coherent.  Consider three consecutive darts $\arc{p}{q}$, $\arc{p}{r}$, and $\arc{p}{s}$ in counterclockwise order around a common tail $p$, with displacement vectors $\colvec{1/k}{0}$, $\colvec{2/k}{1/k}$, and $\colvec{1/k}{1/k}$, respectively.  The local Delaunay determinant test \eqref{Eq:Delaunay} implies that the weights of these four vertices satisfy the inequality $\pi_p + \pi_r > \pi_q + \pi_s + 1/k^2$.
%
%
Every vertex of $\Gamma_k$ appears in exactly four inequalities of this form---twice on the left and twice on the right---so summing all~$k^2$ such inequalities and canceling equal terms yields the obvious contradiction $0>1$.
\end{proof}

\subsection{Example}
\label{SS:example}

As a running example, let $\Gamma$ be the (unweighted) intrinsic Delaunay triangulation of the seven points
\(
	\colvec{0}{0},
	\colvec{1/7}{3/7},\allowbreak
	\colvec{2/7}{6/7},
	\colvec{3/7}{2/7},
	\colvec{4/7}{5/7},
	\colvec{5/7}{1/7},
	\colvec{6/7}{4/7}
\)
on the square flat torus $\Torus_\square$, and let $\Gamma^*$ be the corresponding intrinsic Voronoi diagram, as shown in Figure~\ref{F:Delaunay}.  The triangulation~$\Gamma$ is a highly symmetric geodesic embedding of the complete graph $K_7$; torus embeddings isomorphic to $\Gamma$ and $\Gamma^*$ were studied in several early seminal works on combinatorial topology \cite{m-tpe-1886, h-mct-1890, h-upn-1891}.

\begin{figure}[ht]
\centering
\includegraphics[scale=0.5,page=1]{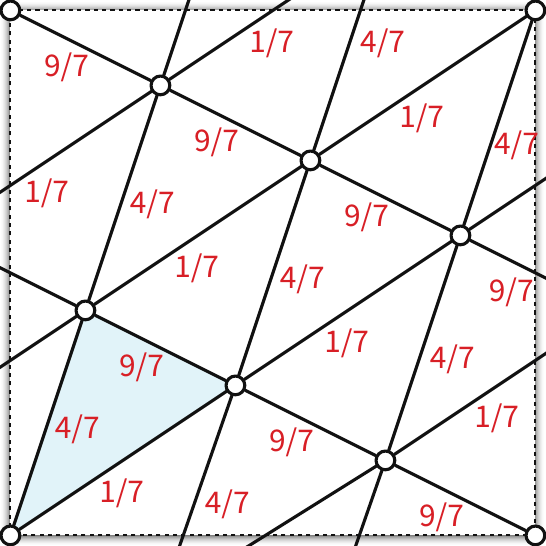}
\qquad
\qquad
\includegraphics[scale=0.5,page=2]{Fig/K7-torus-clipped}
\caption{An intrinsic Delaunay triangulation, its dual Voronoi diagram, and their induced equilibrium stresses.  Compare with Figures~\ref{F:uniform} and \ref{F:skew}.}
\label{F:Delaunay}
\end{figure}

The edges of $\Gamma$ fall into three equivalence classes, with slopes $3$, $2/3$,  $-1/2$ and lengths $\sqrt{10}/7$, $\sqrt{5}/7$, $\sqrt{14}/7$, respectively.  
The triangle $\colvec{0}{0}, \colvec{1/7}{3/7}, \colvec{3/7}{2/7}$, shaded in Figure~\ref{F:Delaunay}, has circumcenter $\colvec{19/98}{17/98}$.  Measuring slopes and distances to the nearby edge midpoints, we find that corresponding dual edges in $\Gamma^*$ have slopes $-1/3$, $-3/2$, and $2$ and lengths $4\sqrt{10}/49$, $\sqrt{5}/49$, and $9\sqrt{14}/49$, respectively.  These dual slopes confirm that $\Gamma$ and $\Gamma^*$ are reciprocal (as are any Delaunay triangulation and its dual Voronoi diagram).  The dual edge lengths imply that assigning stress coefficients $4/7$, $1/7$, and $9/7$ to the edges of $\Gamma$ yields an equilibrium stress for~$\Gamma$, and symmetrically, the stress coefficients $7/4$, $7$, and $7/9$ yield an equilibrium stress for $\Gamma^*$.

Of course, this is not the only equilibrium stress for $\Gamma$; indeed, symmetry implies that $\Gamma$ is in equilibrium with respect to the uniform stress $\omega \equiv 1$.  However, there is no reciprocal embedding $\Gamma^*$ such that every edge in $\Gamma$ has the same length as the corresponding dual edge in $\Gamma^*$.

The doubly-periodic universal cover $\Universal{\Gamma}$ is also in equilibrium with respect to the uniform stress $\omega\equiv 1$.  Thus, the classical Maxwell--Cremona correspondence implies a dual embedding $(\Universal{\Gamma})^*$ in which every dual edge is orthogonal to and has the same length as its corresponding primal edge in~$\Universal{\Gamma}$.  (Borcea and Streinu~\cite[Proposition 2]{bs-lsppf-15} discuss how to solve the infinite linear system giving the heights of the corresponding polyhedral lifting of~$\Universal{\Gamma}$.)  Symmetry implies that $(\Universal{\Gamma})^*$ is doubly-periodic.  Crucially, however,  $\Universal{\Gamma}$ and $(\Universal{\Gamma})^*$ have \emph{different period lattices}.  Specifically, the period lattice of $(\Universal{\Gamma})^*$ is generated by the vectors $\colvec{2}{-1}$ and $\colvec{-1}{2}$; see Figure \ref{F:uniform}.

\begin{figure}[ht]
\centering
\qquad
\raisebox{-0.5\height}{\includegraphics[scale=0.5,page=1]{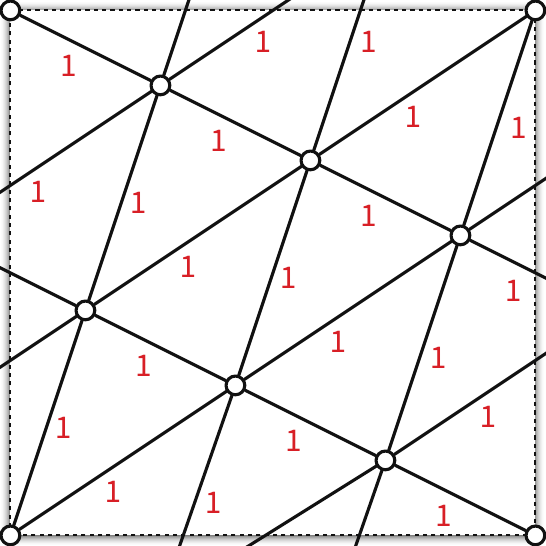}}
\qquad
\raisebox{-0.5\height}{\includegraphics[scale=0.5,page=2]{Fig/K7-torus-uniform}}
\caption{A “reciprocal” embedding (at half scale) induced by the uniform equilibrium stress~$\omega\equiv 1$.  Compare with Figures~\ref{F:Delaunay} and \ref{F:skew}.}
\label{F:uniform}
\end{figure}

Understanding which equilibrium stresses correspond to reciprocal embeddings is the topic of Section~\ref{S:reciprocal}.  In particular, in that section we describe a simple necessary and sufficient condition for an equilibrium stress to be reciprocal, which the unit stress for $\Gamma$ fails.

\section{Coherent = Reciprocal}

Unlike in the previous and following sections, the equivalence between coherent embeddings and reciprocal embeddings generalizes fully from the plane to every flat torus.  However, there is an important difference from the planar setting.  In both the plane and the torus, every translation of a reciprocal diagram is another reciprocal diagram.  For a coherent planar embedding $\Gamma$, \emph{every} reciprocal diagram is a weighted Voronoi diagram of the vertices of $\Gamma$, but \emph{exactly one} reciprocal diagram of a coherent toroidal embedding $\Gamma$ is a weighted Voronoi diagram of the vertices of $\Gamma$.  Said differently, every coherent planar embedding is a weighted Delaunay complex with respect to a three-dimensional space of vertex weights, which correspond to translations of any convex polyhedral lifting, but every coherent \emph{toroidal} embedding is a weighted Delaunay complex with respect to only a \emph{one}-dimensional space of vertex weights.

\subsection{Notation}

Throughout this section, we fix a non-singular matrix $M = \rowvec{u}{v}$ where $u,v \in \R^2$
are column vectors and $\det M > 0$; we also fix a toroidal embedding $\Gamma\colon G\into \Torus_M$.  We primarily work with the universal cover $\Universal{\Gamma}$ of $\Gamma$; if we are given a reciprocal embedding $\Gamma^*$, we also work with its universal cover $\Universal{\Gamma}^*$ (which is reciprocal to $\Universal{\Gamma}$).
Vertices of $\Universal{\Gamma}$ are denoted by the letters~$p$ and~$q$ and treated as column
vectors in~$\R^2$.
A generic face in $\Universal{\Gamma}$ is denoted by the letter~$f$; the corresponding dual vertex in~$\Universal{\Gamma}^*$ is denoted $f^*$ and interpreted as a row vector.
To avoid nested subscripts when darts are indexed, for displacement vectors we write~$\Delta_i = \Delta_{d_i}$ and
$\omega_i = \omega_{d_i}$, and therefore by Lemma~\ref{L:rec-implies-equ}, the dual displacement vectors are given by~$\Delta^*_i = \omega_i\Delta_i^\perp$.  
For any integers~$a$ and~$b$, the translation $p + au + bv$ of any vertex~$p$ of $\Universal{\Gamma}$ is another vertex of $\Universal{\Gamma}$, and the translation $f + au + bv$ of any face $f$ of~$\Universal{\Gamma}$ is another face of~$\Universal{\Gamma}$. It follows that $(f + au + bv)^* = f^* + au^T + bv^T$.

\subsection{Results}

The following lemma follows directly from the definitions of weighted Delaunay graphs and their dual weighted Voronoi diagrams; see, for example, Aurenhammer \cite{a-pdpaa-87,ai-gravd-88}.

\begin{lemma}
\label{L:coh-implies-recip}
Let $\Gamma$ be a weighted Delaunay complex on some flat torus $\Torus_M$, and let $\Gamma^*$ be the corresponding weighted Voronoi diagram on $\Torus_M$.  Every edge $e$ of $\Gamma$ is orthogonal to its dual~$e^*$.  In short, every coherent toroidal embedding is reciprocal.
\end{lemma}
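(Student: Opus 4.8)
The plan is to work entirely in the universal cover $\widetilde{G}$, where, as noted in the discussion following \eqref{Eq:Delaunay2}, the intrinsic weighted Delaunay/Voronoi structure on $\Torus_M$ becomes an ordinary Euclidean power diagram in $\Real^2$. The single fact driving the proof is that the dual vertex $f^*$ of each face $f$ of $\widetilde{G}$ is the \emph{power center} of the vertices of $f$: the unique point having equal power distance to all of them. Concretely, I would establish that for every vertex $p$ of a face $f$,
\[
	\Half\norm{p - f^*}^2 - \pi_p = \text{(a constant depending only on $f$).}
\]
This follows immediately from the lifting in \eqref{Eq:Delaunay2}: the lifted vertices $\bigl(p,\, \Half\norm{p}^2 - \pi_p\bigr)$ of $f$ lie on a common non-vertical plane $z = a\cdot x + b$, and completing the square against the paraboloid $z = \Half\norm{x}^2$ identifies $f^* = a$ and rewrites the coplanarity condition as the displayed equal-power identity. (Alternatively, one may simply cite Aurenhammer~\cite{a-pdpaa-87,ai-gravd-88}.)

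Given this, orthogonality is a one-line cancellation. Let $e = pq$ be an edge of $\widetilde{G}$ bordering faces $f = \Left(e^+)$ and $g = \Right(e^+)$; then both $p$ and $q$ are vertices of both faces. Applying the equal-power identity to the pair $\{p,q\}$ in face $f$, and again in face $g$, and subtracting the two resulting equations, the terms $\Half\norm{p}^2$, $\Half\norm{q}^2$, $\pi_p$, and $\pi_q$ all cancel, leaving
\[
	(q - p)\cdot(f^* - g^*) = 0.
\]
Since $q - p = \Delta_e$ is the direction of $e$ and $f^* - g^*$ is the direction of the dual edge $e^*$, this is exactly the statement that $e \perp e^*$.

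Finally I would push the conclusion back down to $\Torus_M$. The dual vertices $f^*$ and dual edges $e^*$ on the torus are by definition the projections under $\pi_M$ of the power centers and their connecting segments in $\widetilde{G}$, and $\pi_M$ is a local isometry, so orthogonality of corresponding primal and dual edges is preserved. Hence $G$ and $G^*$ are reciprocal. The computation is completely elementary; the only point requiring care is the covering-space bookkeeping---checking that the weights lift periodically and that primal and dual edges on $\Torus_M$ really are the projections of the matching objects in $\widetilde{G}$---but this is already built into the definition of weighted Delaunay and weighted Voronoi graphs by projection from the universal cover, so I expect no genuine obstacle.
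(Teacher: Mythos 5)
Your argument is correct and is exactly the standard power-diagram/lifting argument the paper has in mind: the paper gives no proof of this lemma at all, stating only that it ``follows directly from the definitions'' and citing Aurenhammer, and your write-up (dual vertices as power centers via the lifting \eqref{Eq:Delaunay2}, subtraction of the equal-power identities across the two faces of $e$, projection under the local isometry $\pi_M$) correctly supplies those omitted details.
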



The converse of this lemma is false; unlike in the plane, a reciprocal diagram $\Gamma^*$ for a fixed weighted Delaunay complex~$\Gamma$ is \emph{not} necessarily a weighted Voronoi diagram of the vertices of $\Gamma$.  Rather, as we describe below, a unique translation of~$\Gamma^*$ is such a weighted Voronoi diagram.

Fix a toroidal geodesic embedding $\Gamma\colon G\to \Torus_M$.  Borcea and Streinu’s extension of Maxwell's theorem to periodic planar frameworks \cite{bs-lsppf-15} implies a (non-unique) convex polyhedral lifting $z \colon \R^2 \to \R$ of the universal cover~$\Universal{\Gamma}$ of~$\Gamma$, where the gradient vector $\smash{\nabla z|_f}$ within any face $f$ is equal to the coordinate vector of the dual vertex $f^*$ in a planar reciprocal embedding $\Universal{\Gamma}^*$.
%
To make this lifting unique, we fix a vertex~$o$ of $\Universal{\Gamma}$ to lie at the origin $\colvec{0}{0}$, and we require $z(o) = 0$.

Define the weight of each vertex $p \in \Universal{\Gamma}$ as
\[
\pi_p = \Half\norm{p}^2 - z(p).
\]
By definition, $\pi_o = 0$.  The determinant conditions \eqref{Eq:Delaunay} and \eqref{Eq:Delaunay2} for an edge~$e$ to be locally Delaunay with respect to these vertex weights  $\pi_p$ are both equivalent to the restriction of the lifting $z$ to the faces incident to $e$ being convex.
Because $z$ is a convex polyhedral lifting, these
weights establish that $\Universal{\Gamma}$ is the intrinsic weighted Delaunay graph of its vertex set.

Translating the universal cover $\Universal{\Gamma}^*$ of the reciprocal graph $\Gamma^*$ adds a global linear term to the lifting function $z$, and therefore to the Delaunay weights~$\pi_p$.  The main result of this section is that there is a unique translation such that the corresponding Delaunay weights~$\pi_p$ are periodic with respect to the lattice generated by the columns of $M$.

To compute $z(q)$ for any point $q \in \R^2$, we choose an arbitrary face $f$ of $\Universal{\Gamma}$ that contains $q$ and
identify the equation $z|_f(q) = \eta q + c$ of the plane through the lift of $f$, where $\eta \in\Real^2$ is a row vector and $c \in \R$. Borcea and Streinu~\cite{bs-lsppf-15} give a calculation for
$\eta$ and $c$, which for our setting can be written as follows:

\begin{lemma}[{Borcea and Streinu\cite[Eq.~7]{bs-lsppf-15}}]
\label{L:bs-maxwell}
Let $\Gamma$ and $\Gamma^*$ be reciprocal toroidal embeddings, and let~$\Universal{\Gamma}$ and $\Universal{\Gamma}^*$ be respective universal covers of $\Gamma$ and $\Gamma^*$, with positive equilibrium stress $\omega$ on $\Universal{\Gamma}$ and lifting of $\Universal{\Gamma}$ as induced by Maxwell's theorem.  Fix the coordinates of a vertex $o$ of $\Universal{\Gamma}$ to $\colvec00$, and fix $z(o) = 0$.
Then for any face $f$ of $\Universal{\Gamma}$, the function~$z|_f$ can be computed as follows:
	\begin{itemize}
	\item Pick an arbitrary \textbf{root} face $f_0$ incident to $o$.
	\item Pick an arbitrary path from $f_0^*$ to $f^*$ in $\Universal{\Gamma}^*$,
		and let $d_1^*,\ldots,d_\ell^*$	be the dual darts along this path.
		By definition, we have $f^* = f_0^* + \sum_{i=1}^\ell \Delta^*_i$.
		Set $C(f) = \sum_{i=1}^\ell \omega_i \rowdet{p_i}{q_i}$, where
		$d_i = p_i \arcto q_i$.
	\item Then $z|_f(q) = f^*q + C(f)$ for all $q\in f$. In particular, $C(f)$ is the height of the intersection of this plane with the $z$-axis.
	\end{itemize}
Reciprocality of $\Universal{\Gamma}$ and $\Universal{\Gamma}^*$ implies that the actual choice of root face $f_0^*$ and the path to~$f^*$ do not matter.
\end{lemma}

Fix two reciprocal embeddings $\Gamma$ and $\Gamma^*$ on $\Torus_M$ where $M = \rowvec{u}{v}$.  We use this explicit computation to establish the existence of a translation of~$\Gamma^*$ such that $\pi_o = \pi_{o+u} = \pi_{o+v} = 0$. We then show that after this translation, every lift of the same vertex of $\Gamma$ has the same Delaunay weight.

\begin{lemma}
\label{L:translation}
There is a unique translation of $\Universal{\Gamma}^*$ such that $\pi_o = \pi_{o+u} = \pi_{o+v} = 0$.
Specifically, this translation places the dual vertex of the root face $f_0$ at the point
\[
	f_0^* = \left(-\Half\rowvec{\norm{u}^2}{\norm{v}^2} -
					\rowvec{C(f_0+u)}{C(f_0+v)}\right) M^{-1}.
\]
\end{lemma}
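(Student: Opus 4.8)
The plan is to express each of the three weights $\pi_o,\pi_u,\pi_v$ as an explicit function of the single unknown $f_0^*$ and then solve the resulting linear system. First I would record two facts about the dependence on $f_0^*$. Fixing the position of $f_0^*$ pins down the entire reciprocal cover $\widetilde{G}^*$, since every other dual vertex differs from $f_0^*$ by a sum of the fixed displacements $\Delta^*_i = \omega_i\Delta_i^\perp$; thus choosing $f_0^*$ is the same as choosing the translation of $\widetilde{G}^*$. Moving $f_0^*$ changes every coordinate vector $f^*$ but leaves each quantity $C(f)$ unchanged, because $C(f) = \sum_i \omega_i\rowdet{p_i}{q_i}$ depends only on the primal darts $d_i$ and the stresses $\omega_i$, not on the positions of the dual vertices. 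Moreover, since $o = \colvec{0}{0}$ and the root face $f_0$ is incident to $o$, the empty path gives $C(f_0) = 0$, so $z(o) = f_0^* o + C(f_0) = 0$ regardless of $f_0^*$; hence $\pi_o = \Half\norm{o}^2 - z(o) = 0$ holds automatically, and only the two constraints $\pi_u = 0$ and $\pi_v = 0$ remain.

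Next I would compute $\pi_u$ and $\pi_v$. The lattice translate $u = o+u$ is a vertex of $\widetilde{G}$ lying in the face $f_0+u$, whose dual vertex is $(f_0+u)^* = f_0^* + u^T$ by the transformation rule of the notation section. Applying Lemma~\ref{L:bs-maxwell} to this face gives $z(u) = (f_0^* + u^T)\,u + C(f_0+u) = f_0^* u + \norm{u}^2 + C(f_0+u)$, so that $\pi_u = \Half\norm{u}^2 - z(u) = -\Half\norm{u}^2 - f_0^* u - C(f_0+u)$. The identical computation with $v$ in place of $u$ yields $\pi_v = -\Half\norm{v}^2 - f_0^* v - C(f_0+v)$.

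Finally I would set $\pi_u = \pi_v = 0$. Collecting the two scalar equations and using $M = \rowvec{u}{v}$, these become the single matrix equation $f_0^* M = -\Half\rowvec{\norm{u}^2}{\norm{v}^2} - \rowvec{C(f_0+u)}{C(f_0+v)}$. Since $\det M > 0$, the matrix $M$ is invertible, so this system has exactly one solution, namely the $f_0^*$ stated in the lemma; combined with the bijection between positions of $f_0^*$ and translations of $\widetilde{G}^*$ noted above, this gives both existence and uniqueness of the translation. I expect the only real hazard to be the row/column bookkeeping---in particular applying $(f_0+u)^* = f_0^* + u^T$ correctly and keeping track of which quantities ($f^*$, yes; $C(f)$, no) depend on the translation; once these are settled the conclusion is a one-line linear solve.
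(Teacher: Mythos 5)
Your proposal is correct and follows essentially the same route as the paper: apply Lemma~\ref{L:bs-maxwell} to the face $f_0+u$ to get $z(u) = f_0^* u + \norm{u}^2 + C(f_0+u)$, do the same for $v$, and solve the resulting two linear constraints via the invertibility of $M=\rowvec{u}{v}$. Your additional remarks---that $\pi_o=0$ holds automatically, that $C(f)$ is translation-invariant, and that fixing $f_0^*$ fixes the translation---are correct and merely make explicit what the paper leaves implicit.
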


\begin{proof}
Lemma \ref{L:bs-maxwell} implies that
\[
	z(u)
	~=~
	(f_0 + u)^* u + C(f_0 + u)
	~=~
	f_0^* u + \norm{u}^2 + C(f_0 + u),
\]
and by definition, $\pi_{o+u} = 0$ if and only if $z(u) = \Half\norm{u}^2$.  Thus, $\pi_{o+u}=0$ if and only if $f_0^* u = -\Half\norm{u}^2 - C(f_0 + u)$.  A symmetric argument implies $\pi_{o+v}=0$ if and only if $f_0^* v = -\Half\norm{v}^2 - C(f_0 + v)$.
\end{proof}

\begin{lemma}
\label{L:periodic}
If $\pi_o = \pi_{o+u} = \pi_{o+v} = 0$, then $\pi_p = \pi_{p + u} = \pi_{p + v}$ for each vertex $p$ of $\Universal{\Gamma}$. In other words, all lifts of any vertex of $\Gamma$ have equal weight.
\end{lemma}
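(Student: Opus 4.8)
The plan is to prove the $u$-direction identity $\pi_{p+u}=\pi_p$ for every vertex $p\in V(\widetilde{G})$; the $v$-direction identity $\pi_{p+v}=\pi_p$ then follows by the identical argument with $v$ in place of $u$. The key reduction is to show that the weight difference $\pi_{p+u}-\pi_p$ does \emph{not} actually depend on $p$, and only afterward to pin down its value using the normalization $\pi_o=\pi_u=0$.

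First I would analyze the auxiliary function $w(x) := z(x+u)-z(x)$ on all of $\Real^2$. Since $\widetilde{G}$ is invariant under translation by the lattice vector $u$, any point $x$ lying in a face $f$ of $\widetilde{G}$ has $x+u$ lying in the face $f+u$, and the quasi-periodicity relation $(f+u)^* = f^* + u^T$ recorded above says that the gradient of $z$ on $f+u$ is $f^*+u^T$. Hence on every face the gradient of $w$ equals $(f^*+u^T)-f^* = u^T$. Because $w$ is continuous and carries the \emph{same} constant gradient $u^T$ on every face, it is globally affine: $w(x) = u^T x + b_u$ for some constant $b_u\in\Real$. (Note that this relation survives the translation of $\widetilde{G}^*$ from Lemma~\ref{L:translation}, since translating all dual vertices by a fixed row vector changes neither the gradient differences $(f+u)^*-f^*$ nor the piecewise structure of $z$, only the additive constants.)

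Next I would substitute this into the definition of the weights. Expanding $\norm{p+u}^2 = \norm{p}^2 + 2\,u^T p + \norm{u}^2$ gives
\begin{align*}
	\pi_{p+u}-\pi_p
	&= \Half\norm{p+u}^2 - \Half\norm{p}^2 - \big(z(p+u)-z(p)\big) \\
	&= u^T p + \Half\norm{u}^2 - w(p)
	= \Half\norm{u}^2 - b_u,
\end{align*}
which is indeed independent of $p$. Evaluating at $p=o=\colvec{0}{0}$ and using $\pi_o=\pi_u=0$ forces $\Half\norm{u}^2-b_u = \pi_u-\pi_o = 0$, so the difference vanishes for every $p$; that is, $\pi_{p+u}=\pi_p$.

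The only delicate point is the first step: one must check that translation by $u$ genuinely carries the tiling $\widetilde{G}$ to itself (so that faces map to faces and the dual-vertex relation $(f+u)^*=f^*+u^T$ applies pointwise), and then invoke continuity of $w$ to upgrade ``constant gradient on each face'' to ``globally affine.'' Everything after that is bookkeeping, and the symmetry between $u$ and $v$ makes the second identity free.
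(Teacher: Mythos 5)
Your proof is correct, but it takes a genuinely different route from the paper's. The paper proves the lemma by explicitly computing the intercept $C(f+u)$ of the lifted plane over the translated face, using the Borcea--Streinu path formula of Lemma~\ref{L:bs-maxwell} (summing $\omega_i\rowdet{p_i+u}{q_i+u}$ along a translated dual path) together with the value of $f_0^*u$ fixed in Lemma~\ref{L:translation}, and then expands $\pi_{p+u}$ term by term until everything cancels. You instead observe that the difference function $w(x)=z(x+u)-z(x)$ has gradient $(f+u)^*-f^*=u^T$ on every face, and since $z$ is continuous and piecewise affine, $w$ is globally affine, $w(x)=u^Tx+b_u$; this makes $\pi_{p+u}-\pi_p=\Half\norm{u}^2-b_u$ manifestly independent of $p$, and the normalization $\pi_u=\pi_o=0$ forces it to vanish. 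Your argument is shorter and more conceptual: it bypasses Lemma~\ref{L:bs-maxwell} entirely, needing only that $\nabla z|_f=f^*$ and the quasi-periodicity $(f+u)^*=f^*+u^T$, and it isolates cleanly the reason the weight difference is constant (a translation of the dual diagram perturbs $z$ only by a global affine function). The paper's computation, by contrast, is fully explicit and doubles as a verification of the formula for $f_0^*$ produced in Lemma~\ref{L:translation}. The one step you flag as delicate --- that translation by $u$ carries $\widetilde{G}$ to itself and that a continuous function with the same constant gradient on every face of a cellular subdivision of $\Real^2$ is affine --- is indeed the load-bearing step, but both facts are already available in the paper's setup (periodicity of the universal cover and convexity/continuity of the polyhedral lift $z$), so there is no gap.
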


\begin{proof}
Let $f$ be any face incident to $p$, and let $P = d^*_1,\ldots,d^*_\ell$ be an arbitrary path from~$f^*_0$ to $f^*$ in~$\Universal{\Gamma}^*$.  We compute $C(f+u)$ by traversing an arbitrary path from $f^*_0$ to~$(f_0 + u)^* = f^*_0+u^T$ followed by the translated path~$P+u$ from~$f^*_0+u^T$ to~$f^*+u^T$.  Thus by Lemma~\ref{L:bs-maxwell},~$C(f + u) = C(f_0 + u) + \sum_{i=1}^\ell \omega_i\rowdet{(p_i+u)}{(q_i+u)}$, and $f^* = f_0^* + \sum_{i=1}^\ell \Delta^*_i$. We thus have
\begin{align*}
C(f + u)
	&= C(f_0 + u) + \sum_{i=1}^\ell \omega_i\rowdet{(p_i+u)}{(q_i+u)} &\\
	&= C(f_0 + u) + \sum_{i=1}^\ell \omega_i\rowdet{p_i}{q_i} - \sum_{i=1}^\ell \Delta^*_i u \\
	&= C(f_0 + u) + C(f) - \sum_{i=1}^\ell \Delta^*_iu \\
	&= -\Half\norm{u}^2 - f_0^* u + C(f) - \sum_{i=1}^\ell \Delta^*_iu \\
	&= -\Half\norm{u}^2 - f^* u + C(f).
\end{align*}
It follows that
\begin{align*}
\pi_{p + u}
	&= \Half\norm{p + u}^2 - z(p + u) \\
	&= \Half\norm{p + u}^2 - \left(C(f + u) + (f^* + u^T)(p + u)\right)\\
	&= \Half\norm{p + u}^2 - \left(-\Half\norm{u}^2 - f^* u + C(f) + f^* p + f^* u + u^T p + \norm{u}^2\right) \\
	&= \Half\norm{p + u}^2 - z(p) - \Half\norm{u}^2 - u^Tp \\
	&= \Half\norm{p}^2 + \Half\norm{u}^2 + u^Tp - z(p) - \Half\norm{u}^2 - u^Tp \\
	&= \Half\norm{p}^2 - z(p) \\
	&= \pi_p.
\end{align*}
A similar computation implies $\pi_{p+v} = \pi_p$.
\end{proof}

Projecting from the universal cover back to the torus, we obtain weights for the vertices of $\Gamma$, with respect to which $\Gamma$ is an intrinsic weighted Delaunay complex, and a unique translation of~$\Gamma^*$ that is the corresponding intrinsic weighted Voronoi diagram.  Moreover, these Delaunay vertex weights become unique if we fix the weight of an arbitrary vertex to be $0$.

\begin{theorem}
\label{T:recip-implies-del}
Let $\Gamma$ and $\Gamma^*$ be reciprocal geodesic embeddings on some flat torus $\Torus_M$.  $\Gamma$ is a weighted Delaunay complex, and a unique translation of~$\Gamma^*$ is the corresponding weighted Voronoi diagram.  In short, every reciprocal toroidal embedding is coherent.
\end{theorem}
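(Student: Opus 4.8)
The plan is to assemble the theorem from the planar weighted-Delaunay structure of the universal cover, which is already set up above, together with the translation and periodicity lemmas; the one genuinely delicate point is the descent from a statement about the infinite cover to a global statement on the torus.

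First I would record the planar content. Because $G$ and $G^*$ are reciprocal on $\Torus_M$, their universal covers $\widetilde{G}$ and $\widetilde{G}^*$ are reciprocal periodic plane graphs, so by Lemma~\ref{L:rec-implies-equ} the coefficients $\omega_e = \abs{e^*}/\abs{e}$ form a positive equilibrium stress on $\widetilde{G}$. Maxwell's correspondence then furnishes the convex polyhedral lifting $z$ with $\nabla z|_f = f^*$, and the weights $\pi_p = \Half\norm{p}^2 - z(p)$ turn the local Delaunay determinant tests \eqref{Eq:Delaunay}--\eqref{Eq:Delaunay2} into the statement that the lift is locally convex along each edge. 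Convexity of $z$ therefore already certifies that $\widetilde{G}$ is the planar weighted Delaunay graph of its weighted vertices.

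Next I would confront the actual obstruction: these weights need not be periodic, so a priori they do not descend to well-defined vertex weights on $G$, and a generic reciprocal diagram $G^*$ is not itself a weighted Voronoi diagram. The remedy is that translating $\widetilde{G}^*$ adds a global linear term to $z$ and hence to the $\pi_p$. I would invoke Lemma~\ref{L:translation}: since $\det M > 0$, the two conditions $\pi_u = 0$ and $\pi_v = 0$ (with $\pi_o = 0$ fixed by convention) determine the placement of $f_0^*$ uniquely, so there is exactly one translate of $\widetilde{G}^*$ with $\pi_o = \pi_u = \pi_v = 0$. Lemma~\ref{L:periodic} then upgrades this to full periodicity, $\pi_p = \pi_{p+u} = \pi_{p+v}$ for every vertex $p$.

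Finally I would project back to the torus: periodic weights descend to well-defined vertex weights on $G$, and since weighted Delaunay graphs on $\Torus_M$ are exactly the projections of planar weighted Delaunay graphs of the lifted, weighted vertex set, $G$ is coherent and the uniquely translated $G^*$ is its weighted Voronoi diagram. Uniqueness of the translation is immediate from Lemma~\ref{L:translation}, and the weights are pinned down up to the one-parameter choice of adding a constant, that is, up to fixing one vertex weight to~$0$. I expect the crux to be exactly this descent step---recognizing that periodicity of the weights is both necessary for well-definedness on the torus and achievable by a unique translate---whereas the planar convex-lifting argument is routine and the homological bookkeeping it requires is already packaged in Lemmas~\ref{L:bs-maxwell}--\ref{L:periodic}.
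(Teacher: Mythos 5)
Your proposal is correct and follows essentially the same route as the paper: Maxwell's correspondence applied to the universal cover yields the convex lifting $z$ and weights $\pi_p = \Half\norm{p}^2 - z(p)$, Lemma~\ref{L:translation} pins down the unique translate of $\widetilde{G}^*$ with $\pi_o = \pi_u = \pi_v = 0$, Lemma~\ref{L:periodic} gives periodicity, and the weights then descend to $\Torus_M$. Your identification of the descent step (periodicity of the weights being both necessary and achievable by a unique translate) as the crux matches exactly how the paper organizes the argument.
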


\section{Equilibrium Implies Reciprocal, Sort Of}
\label{S:reciprocal}


Now fix an essentially simple, essentially 3-connected geodesic embedding $\Gamma$ on the \emph{square} flat torus~$\Torus_\square$, along with a positive equilibrium stress $\omega$ for $\Gamma$.  In this section, we describe simple necessary and sufficient conditions for $\omega$ to be a reciprocal stress for $\Gamma$.  More generally, we show that when $\omega$ is a \emph{positive} equilibrium stress, there is an essentially unique flat torus~$\Torus_M$ such that a unique scalar multiple of $\omega$ is a reciprocal stress for the image of $\Gamma$ on~$\Torus_M$.


\subsection{Cocirculations and Cohomology}

Fix arbitrary dual geodesic embeddings $\Gamma\colon G\to\Torus$ and $\Gamma^*\colon G^*\to\Torus$.  A \EMPH{cocirculation} in~$\Gamma$ is a \emph{row} vector $\theta\in\Real^E$ whose transpose describes a circulation in $G^*$.  The \EMPH{cohomology} class $[\theta]^* = [\theta]^*_\Gamma$ of any cocirculation (with respect to $\Gamma$) is the transpose of the homology class of the circulation~$\theta^T$ in~$\Gamma^*$.  Recall that $\Lambda_\Gamma$ is the $2\times E$ matrix whose columns are homology vectors of edges in~$\Gamma$.  Let $\lambda_1$ and~$\lambda_2$ denote the first and second rows of~$\Lambda_\Gamma$.  

\begin{lemma}
\label{L:cocirc}
Fix a geodesic embedding $\Gamma\colon G\to \Torus$.  The row vectors $\lambda_1$ and $\lambda_2$ describe cocirculations in $\Gamma$ with cohomology classes $[\lambda_1]^* = \rowvec{0}{1}$ and $[\lambda_2]^* = \rowvec{-1}{0}$.
\end{lemma}

\begin{proof}
Without loss of generality, assume that $\Torus = \Torus_\square$ and no vertices of $\Gamma$ lie on the boundary of the fundamental square $\square$.  Let $\gamma_1$ and $\gamma_2$ denote directed cycles in~$\Torus_\square$ (\emph{not} in~$\Gamma$) induced by the boundary edges of $\square$, directed respectively rightward and upward.

Let $d_0, d_1, \dots, d_{k-1}$ be the sequence of darts in $\Gamma$ whose images in $\Gamma$ cross $\gamma_2$ from left to right, indexed by the upward order of their intersection points.  Each dart $d$ that appears in this sequence appears exactly $\lambda_1(d)$ times, once for each crossing.  For each index $i$, we have~$\Left(d_i) = \Right(d_{i+1\bmod k})$; thus, the corresponding sequence of dual darts~$d^*_0, d^*_1, \dots, d^*_{k-1}$ describes a closed walk in $\Gamma^*$.  This closed walk can be continuously deformed to $\gamma_2$, so it has the same homology class as $\gamma_2$; see Figure~\ref{F:cocirc}.  We conclude that~$[\lambda_1]^* = \rowvec{0}{1}$.  

\begin{figure}[ht]
\centering
\includegraphics[scale=0.41,page=1]{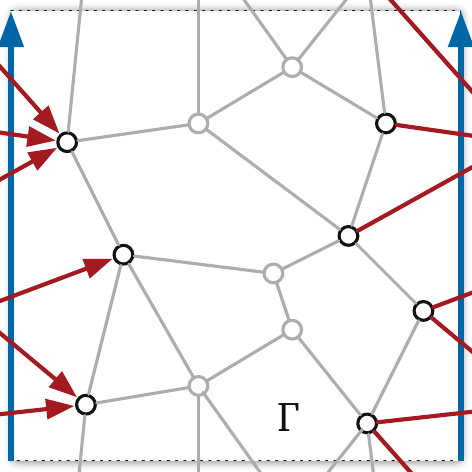}\quad
\includegraphics[scale=0.41,page=2]{Fig/cohomology}\qquad\quad
\includegraphics[scale=0.41,page=1]{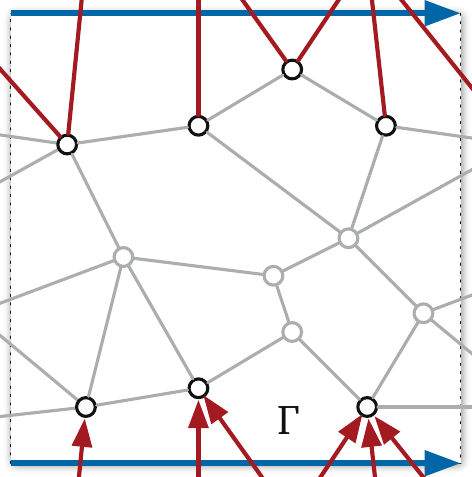}\quad
\includegraphics[scale=0.41,page=2]{Fig/cohomologx}
\caption{Proof of Lemma \ref{L:cocirc}: The darts in $\Gamma$ crossing either boundary edge of the fundamental square dualize to a closed walk in~$\Gamma^*$ parallel to that boundary edge.}
\label{F:cocirc}
\end{figure}

Symmetrically, the darts crossing $\gamma_1$ upward define a closed walk in $\Gamma^*$ in the same homology class as the reversal of $\gamma_1$, and therefore $[\lambda_2]^* = \rowvec{-1}{0}$.
\end{proof}


\subsection{The Square Flat Torus}

Before considering arbitrary flat tori, we first establish necessary and sufficient conditions for reciprocal stresses for embeddings on the \emph{square} flat torus.  Fix an essentially simple, essentially 3-connected embedding $\Gamma\colon G\to \Torus_\square$ and let $\omega$ be a positive equilibrium stress on~$\Gamma$. Let $\Delta$ be the $2 \times E$ displacement matrix of $\Gamma$, and let $\Omega$ be the $E\times E$ matrix whose diagonal entries are $\Omega_{e,e} = \omega_e$ and whose off-diagonal entries are all~$0$. Our results will be in terms of the \EMPH{covariance} matrix $\Delta\Omega\Delta^T$. Recall that $A^\perp = (JA)^T$ and that $\abs{e}$ denotes the length of an edge $e$ in $\Gamma$.

\begin{lemma}
\label{L:square-abg}
If $\omega$ is a reciprocal stress for a geodesic embedding $\Gamma$ on $\Torus_\square$, then $\Delta\Omega\Delta^T = \begin{psmallmatrix} 1 & 0 \\ 0 & 1 \end{psmallmatrix}$.
\end{lemma}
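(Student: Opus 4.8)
The plan is to exhibit both $\Delta\Omega\Delta^T$ and the target identity matrix as the same matrix product and then show the two agree. First I would record the consequences of reciprocality. Since $\omega$ is a reciprocal stress, Lemma~\ref{L:rec-implies-equ} supplies a reciprocal dual $G^*$ on $\Torus_\square$ whose (transposed) displacement matrix satisfies $\Delta^*_{e^*} = \omega_e \Delta_e^\perp$ edge by edge; packaging these as a matrix identity gives $\Delta^* = \Omega\Delta^\perp = \Omega (J\Delta)^T$, so the ordinary $2\times E$ displacement matrix of $G^*$ is $(\Delta^*)^T = J\Delta\Omega$ (using $\Omega^T=\Omega$). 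The same lemma (or the standing hypothesis of the section) also tells us that $\omega$ is an equilibrium stress for $G$.

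Next I would compute $\Delta\Omega\Lambda^T$. By Lemma~\ref{L:cocirc}, the rows $\lambda_1,\lambda_2$ of $\Lambda$ are cocirculations in $G$, so $\lambda_1^T$ and $\lambda_2^T$ are circulations in $G^*$, and their homology classes in $G^*$ are the transposes of the stated cohomology classes, namely $\colvec{0}{1}$ and $\colvec{-1}{0}$. Applying Lemma~\ref{L:harmonic} to $G^*$---whose displacement matrix is exactly $(\Delta^*)^T$---evaluates each circulation against its homology class, giving $(\Delta^*)^T\lambda_1^T = \colvec{0}{1}$ and $(\Delta^*)^T\lambda_2^T = \colvec{-1}{0}$. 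Assembling these two equations columnwise yields $(\Delta^*)^T\Lambda^T = J$, that is $J\Delta\Omega\Lambda^T = J$, and multiplying on the left by $J^{-1}$ gives $\Delta\Omega\Lambda^T = I$.

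It then remains to replace $\Lambda$ by $\Delta$ inside this product. On the square torus Lemma~\ref{L:harmonic} gives $\Delta\phi = \Lambda\phi$ for every circulation $\phi$, so $(\Delta-\Lambda)\phi=0$ for all $\phi$; hence each row of $\Delta-\Lambda$ is orthogonal to the cycle space and therefore lies in the cut space of $G$. On the other hand, the equilibrium condition $\sum_{\arc{p}{q}} \omega_{pq}\Delta_{\arc{p}{q}} = \colvec{0}{0}$ at every vertex is precisely the matrix identity $\Delta\Omega B^T = 0$, where $B$ is the vertex--edge incidence matrix; since the cut space is the column space of $B^T$, this says $\Delta\Omega$ annihilates every cut vector. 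Writing $(\Delta-\Lambda)^T = B^T Y$ for a suitable matrix $Y$ then gives $\Delta\Omega(\Delta-\Lambda)^T = \Delta\Omega B^T Y = 0$, so $\Delta\Omega\Delta^T = \Delta\Omega\Lambda^T = I$, as claimed.

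I expect the main obstacle to be exactly this final interchange of $\Delta$ and $\Lambda$: one must recognize that the square-torus harmonic identity forces $\Delta-\Lambda$ to be a coboundary and that equilibrium is precisely the statement that $\Delta\Omega$ kills coboundaries, so that the two products coincide. The rest is careful bookkeeping---keeping the row/column conventions for $G^*$ straight and tracking the rotation $J$ accurately enough that the factors cancel to $+I$ rather than $-I$, consistent with the normalization of the square torus.
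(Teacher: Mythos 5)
Your proof is correct, and its first half coincides with the paper's: both use Lemma~\ref{L:cocirc} together with Lemma~\ref{L:harmonic} applied to $G^*$ to pin down the product of the dual displacement matrix with $\Lambda$, which in your (transposed) convention reads $\Delta\Omega\Lambda^T = I$ and in the paper's reads $\Lambda\Delta^* = -J$. The divergence is in the final interchange of $\Lambda$ and $\Delta$. The paper notes that the darts leaving each vertex $p$ of $G$ dualize to the boundary cycle of the face $p^*$ of $G^*$, so the columns of $\Delta^* = \Omega\Delta^T J^T$ are circulations in $G$, and then applies Lemma~\ref{L:harmonic} a second time, now to $G$, to conclude $\Delta\Delta^* = \Lambda\Delta^*$. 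You run the transposed argument instead: Lemma~\ref{L:harmonic} forces each row of $\Delta - \Lambda$ into the orthogonal complement of the circulation space, i.e.\ the row space of the incidence matrix $B$, while equilibrium is exactly $\Delta\Omega B^T = 0$, so $\Delta\Omega(\Delta-\Lambda)^T = 0$. These two justifications are equivalent --- $\Delta\Omega B^T = 0$ is precisely the statement that the columns of $\Omega\Delta^T$ (hence of $\Delta^*$) are circulations in $G$ --- so nothing new is gained logically, but your version leans only on the equilibrium of $G$ (supplied by Lemma~\ref{L:rec-implies-equ}) rather than on the contractibility of dual face boundaries, and it keeps the whole second half as linear algebra on $G$ alone. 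Your sign bookkeeping, including $(\Delta^*)^T = J\Delta\Omega$ and $(\Delta^*)^T\Lambda^T = J$, is consistent with the paper's conventions and lands on $+I$ as required.
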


\begin{proof}
Suppose $\omega$ is a reciprocal stress for $\Gamma$.  Then by definition, there is a geodesic embedding $\Gamma^*\colon {G^*\to \Torus_\square}$ such that $e\perp e^*$ and 
$\abs{e^*} = \omega_e\abs{e}$ for every edge $e$ of $\Gamma$.  Let 
$\Delta^* = (\Delta\Omega)^\perp$ denote the $E \times 2$ matrix whose rows are the
displacement row vectors of $\Gamma^*$.

Recall from Lemma~\ref{L:cocirc} that the first and second rows of $\Lambda_\Gamma$ describe cocirculations of $\Gamma$ with cohomology classes $\rowvec{0}{1}$ and $\rowvec{-1}{0}$, respectively.  Applying Lemma~\ref{L:harmonic} to $\Gamma^*$ implies~$\theta\Delta^* = [\theta]^*$ for any cocirculation $\theta$ in $\Gamma$.  It follows immediately that
$\Lambda_\Gamma\Delta^* = \begin{psmallmatrix} 0 & 1 \\ -1 & 0 \end{psmallmatrix} = -J$.

Because the rows of $\Delta^*$ are the displacement vectors of $\Gamma^*$, for every vertex $p$ of $\Gamma$ we have
\begin{equation}
	\sum_{q\colon pq\in E} \Delta^*_{(\arc{p}{q})^*}
	~=~
	\sum_{d \colon \Tail(d) = p} \Delta^*_{d^*}
	~=~
	\sum_{d \colon \Left(d^*) = p^*} \Delta^*_{d^*}
	~=~
	\rowvec{0}{0}.
	\label{Eq:circulation}
\end{equation}
It follows that the \emph{columns} of $\Delta^*$ describe circulations in $\Gamma$.
Lemma~\ref{L:harmonic} now implies that~$\Delta\Delta^* = -J$.  We conclude that $\Delta\Omega\Delta^T = \Delta\Delta^* J = \begin{psmallmatrix} 1 & 0 \\ 0 & 1 \end{psmallmatrix}$.
\end{proof}

\begin{lemma}
\label{L:dual-embed-on-square}
Fix an essentially simple, essentially 3-connected geodesic embedding $\Gamma\colon G \to \Torus_\square$ and an $E \times 2$ matrix $\Delta^*$.  If $\Lambda_\Gamma\Delta^* = -J$, then $\Delta^*$ is the displacement matrix of a geodesic drawing $\Gamma^*$ on $\Torus_\square$ that is homotopic to a geometric dual of $\Gamma$.  Moreover, if that drawing admits a positive equilibrium stress, it is actually an embedding.
\end{lemma}

\begin{proof}
Let $\lambda_1$ and $\lambda_2$ denote the rows of $\Lambda_\Gamma$.  Rewriting the identity
$\Lambda_\Gamma\Delta^* = -J$ in terms of these row vectors gives us
\(
	\sum_e \Delta^*_e\lambda_{1,e} = \rowvec{0}{1} = [\lambda_1]^*
\)
\text{and} 
\(
	\sum_e \Delta^*_e\lambda_{2,e} = {\rowvec{-1}{0} = [\lambda_2]^*}.
\)
Extending by linearity, we have $\sum_e \Delta^*_e\theta_e = [\theta]^*$ for every cocirculation~$\theta$ in~$\Gamma^*$.  The result now follows from Corollary~\ref{C:embed-on-square}. 
\end{proof}

\begin{lemma}
\label{L:abg-square}
Fix an essentially simple, essentially 3-connected geodesic embedding $\Gamma\colon G \to \Torus_\square$ with a positive equilibrium stress $\omega$. 
If $\Delta\Omega\Delta^T = \begin{psmallmatrix} 1 & 0 \\ 0 & 1 \end{psmallmatrix}$, then $\omega$ is a reciprocal stress for~$\Gamma$.
\end{lemma}

\begin{proof}
Set $\Delta^* = (\Delta\Omega)^\perp$.  Because $\omega$ is an equilibrium stress for $\Gamma$, for every vertex $p$ of $\Gamma$ we have
\begin{equation}
	\sum_{q\colon pq\in E} \Delta^*_{(\arc{p}{q})^*}
	~=~
	\sum_{q\colon pq\in E} \omega_{pq} \Delta_{\arc{p}{q}}^\perp
	~=~
	\rowvec{0}{0}.
	\label{Eq:circulation2}
\end{equation}
It follows that the columns of $\Delta^*$ describe circulations in~$\Gamma$, and therefore Lemma~\ref{L:harmonic} implies~$\Lambda_\Gamma\Delta^* = \Delta\Delta^* = \Delta (\Delta\Omega)^\perp = \Delta\Omega\Delta^T J^T = -J$.

Lemma \ref{L:dual-embed-on-square} now implies that~$\Delta^*$ is the displacement matrix of a drawing~$\Gamma^*$ homotopic to a dual to $\Gamma$.  Moreover, the stress vector $\omega^*$ defined by $\omega^*_{e^*} = 1/\omega_e$ is an equilibrium stress for $\Gamma^*$: under this stress vector, the darts leaving any dual vertex~$f^*$ are dual to the clockwise boundary cycle of face $f$ in $\Gamma$.  Thus since $\omega^*$ is positive, $\Gamma^*$ is in fact a dual embedding.  
By construction, each edge of $\Gamma^*$ is orthogonal to the corresponding edge of~$\Gamma$.
\end{proof}

\subsection{Force Diagrams}
\label{SS:force}

The results of the previous section have an alternative interpretation that may be more intuitive.  Let~$\Gamma$ be any geodesic embedding on the unit square flat torus $\Torus_\square$.  Recall from Section \ref{SS:example} that any equilibrium stress~$\omega$ on $\Gamma$ induces an equilibrium stress on its universal cover $\Universal{\Gamma}$, which in turn induces a reciprocal diagram~$(\Universal{\Gamma})^*$, which is unique up to translation, by the classical Maxwell--Cremona correspondence.  This infinite plane graph $(\Universal{\Gamma})^*$ is doubly-periodic, but in general with a different period lattice from the universal cover $\Universal{\Gamma}$.

Said differently, we can always construct another geodesic torus embedding $\Digamma$ that is combinatorially dual to $\Gamma$, such that for every edge $e$ of $\Gamma$, the corresponding edge $e^*$ of~$\Digamma$ is “orthogonal” to $e$ (insofar as their displacement vectors, considered as vectors in $\R^2$, are orthogonal) and has length $\omega_e\cdot\abs{e}$; however, this embedding $\Digamma$ does not necessarily lie on the \emph{square} flat torus.  Specifically, $\Digamma$ is the quotient of some reciprocal diagram $(\Universal{\Gamma})^*$ of the universal cover with respect to its period lattice.  We call~$\Digamma$ a \EMPH{force diagram} of $\Gamma$ with respect to $\omega$.  Force diagrams are unique up to translation.  A force diagram $\Digamma$ lies on the same flat torus $\Torus_\square$ as~$\Gamma$ if and only if $\omega$ is a reciprocal stress for $\Gamma$.

\begin{lemma}
\label{L:force}
Fix an essentially simple, essentially 3-connected geodesic embedding $\Gamma\colon G \to \Torus_\square$, and let~$\omega$ be a positive equilibrium stress for $\Gamma$.  Every force diagram of $\Gamma$ with respect to $\omega$ lies on the flat torus~$\Torus_M$, where $M = J\Delta\Omega\Delta^T J^T$.
\end{lemma}

\begin{proof}
As usual, let $\Delta$ be the displacement matrix of $\Gamma$.  Let $\Delta^*$ denote the displacement matrix of some force diagram $\Digamma$; by definition, we have $\Delta^* = (\Delta\Omega)^\perp = \Omega\Delta^T J^T$.   Equation~\eqref{Eq:circulation2} implies that the columns of $\Delta^*$ are circulations in $\Gamma$.  Thus, Lemma~\ref{L:harmonic} implies that $\Lambda\Delta^* = \Delta\Delta^* = \Delta\Omega\Delta^TJ^T$.

Set $M = J\Delta\Delta^* = J\Delta\Omega\Delta^TJ^T$.  We immediately have $\Lambda\Delta^* = J^{-1}M = -J M = -J M^T$ and therefore $\Lambda\Delta^*(M^T)^{-1} = -J$.  Lemma~\ref{L:dual-embed-on-square} implies that $\Delta^*(M^T)^{-1}$ is the displacement matrix of a geodesic embedding $\Gamma^* \colon G^*\to \Torus_\square$ dual to $\Gamma$.  It follows that $\Delta^*$ is the displacement matrix of the affine image of $\Gamma^*$ on~$\Torus_M$.  We conclude that the force diagram~$\Digamma$ is a translation of $\underline{M}\circ \Gamma^*$.
\end{proof}

\subsection{Arbitrary Flat Tori}

Now we generalize our earlier analysis to embeddings on arbitrary flat tori.

Fix a \emph{reference} embedding $\Gamma$ on the square flat torus $\Torus_\square$, and let $\omega$ be a positive equilibrium stress on $\Gamma$.  Let $\Delta$ be the displacement matrix of $\Gamma$, and $\Omega$ be the matrix whose diagonal entries are $\Omega_{e,e} = \omega_e$ and whose off-diagonal entries are all $0$.  

Now fix a non-singular $2\times 2$ matrix $M$.  Recall from Lemma~\ref{L:equ-affine} that $\omega$ is also a positive equilibrium stress for the induced embedding $\underline{M} \circ \Gamma\colon G\to \Torus_M$.  In this section, we establish necessary and sufficient conditions for $\omega$ to be a \emph{reciprocal} stress on $\underline{M} \circ \Gamma$.  We state these conditions in terms of the (entries of the) covariance matrix $\Delta\Omega\Delta^T = \begin{psmallmatrix} \alpha & \gamma \\ \gamma & \beta \end{psmallmatrix}$, where
\begin{equation}
	\alpha = \sum_e \omega_e \Dx_e^2,
	\qquad \beta = \sum_e \omega_e \Dy_e^2,
	\qquad \gamma = \sum_e \omega_e \Dx_e\Dy_e.
	\label{Eq:alphabet}
\end{equation}
We emphasize that these covariance parameters are defined with respect to the \emph{reference} embedding~$\Gamma$.

\begin{lemma}
\label{L:nonsq-abg}
If $\omega$ is a reciprocal stress for the geodesic embedding $\underline{M} \circ \Gamma\colon G \to \Torus_M$, then~$\alpha\beta-\gamma^2 = 1$; in particular, if $M = \begin{psmallmatrix} a & b \\ c & d \end{psmallmatrix}$, then
\[
	\alpha = \frac{b^2+d^2}{ad-bc},
	\qquad
	\beta = \frac{a^2+c^2}{ad-bc},
	\qquad
	\gamma = \frac{-(ab+cd)}{ad-bc}.
\]
For example, if $M = \rowvec{u}{v}$ where $u,v \in \R^2$ are column vectors and $\det M = 1$, then~$\Delta\Omega\Delta^T = \begin{psmallmatrix} v\cdot v & -u \cdot v \\[0.25ex] -u \cdot v & u\cdot u \end{psmallmatrix}$.
\end{lemma}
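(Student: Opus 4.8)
The plan is to mirror the proof of Lemma~\ref{L:square-abg}, carrying the shape matrix $M$ through every step. Write $\Delta$ for the displacement matrix of $G$ on the reference torus $\Torus_\square$, so that the image of $G$ on $\Torus_M$ has displacement matrix $\Delta_M = M\Delta$, and abbreviate $S = \Delta\Omega\Delta^T = \begin{psmallmatrix}\alpha & \gamma\\ \gamma & \beta\end{psmallmatrix}$. Because $\omega$ is a reciprocal stress on $\Torus_M$, there is a dual embedding $G^*$ on $\Torus_M$ whose edges are orthogonal to those of $G$ with $\abs{e^*} = \omega_e\abs{e}$; exactly as in Lemma~\ref{L:square-abg} this forces its (transposed) displacement matrix to be $\Delta_M^* = (\Delta_M\Omega)^\perp = (M\Delta\Omega)^\perp = \Omega\Delta^T M^T J^T$. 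The goal is then to pin down $S$ by computing the product $\Delta_M\Delta_M^*$ two different ways and matching the results.

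First I would establish the homological identity $\Lambda\Delta_M^* = -JM^T$, the $\Torus_M$ analogue of the relation $\Lambda\Delta^* = -J$ from Lemma~\ref{L:square-abg}. By Lemma~\ref{L:cocirc} the rows $\lambda_1,\lambda_2$ of $\Lambda$ are cocirculations in $G$ with cohomology classes $\rowvec{0}{1}$ and $\rowvec{-1}{0}$, hence circulations in $G^*$ with the corresponding homology classes. Applying Lemma~\ref{L:harmonic} to $G^*$ on $\Torus_M$ — whose only change from the square case is that the total displacement of a circulation equals $M$ times its homology class, since displacement on $\Torus_M$ carries the extra term $M[\arc{p}{q}]$ — yields $\lambda_i\Delta_M^* = [\lambda_i]^* M^T$, and assembling the two rows gives $\Lambda\Delta_M^* = -JM^T$. (Equivalently, one can pull $G^*$ back to $\Torus_\square$ by $M^{-1}$ and invoke Lemma~\ref{L:harmonic} there directly.)

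Next I would use equilibrium. Exactly as in Equation~\eqref{Eq:circulation2}, the fact that $\omega$ is an equilibrium stress makes the columns of $\Delta_M^*$ circulations in $G$, so Lemma~\ref{L:harmonic} together with $\Delta_M = M\Delta$ gives $\Delta_M\Delta_M^* = M\Lambda\Delta_M^* = -MJM^T$. On the other hand, multiplying out directly gives $\Delta_M\Delta_M^* = M(\Delta\Omega\Delta^T)M^T J^T = MSM^TJ^T$. Equating the two expressions and cancelling the invertible factor $M$ leaves the matrix equation $SM^T J = JM^T$, which I would solve for $S = -JM^TJ(M^T)^{-1}$.

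The remainder is routine linear algebra: substituting $M = \begin{psmallmatrix}a&b\\c&d\end{psmallmatrix}$ into $S = -JM^TJ(M^T)^{-1}$ and reading off the entries produces the claimed formulas for $\alpha$, $\beta$, and $\gamma$, while $\det S = (\det J)^2\,\det M^T\,(\det M^T)^{-1} = 1$ gives $\alpha\beta-\gamma^2 = 1$; the stated example is the specialization $\det M = 1$ with $M = \rowvec{u}{v}$, where $v\cdot v = b^2+d^2$, $u\cdot u = a^2+c^2$, and $-u\cdot v = -(ab+cd)$. I expect the main obstacle to be purely bookkeeping: getting the transpose conventions and the signs of $J$ versus $J^T$ right when transferring Lemma~\ref{L:harmonic} and Lemma~\ref{L:cocirc} from $G$ to its dual on $\Torus_M$, since a single sign error there corrupts the final matrix identity.
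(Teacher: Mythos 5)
Your proposal is correct and follows essentially the same route as the paper's proof: both hinge on $\Delta^*_{e^*} = \omega_e(M\Delta_e)^\perp$, on Lemmas~\ref{L:cocirc} and~\ref{L:harmonic} applied to the circulation columns and cocirculation rows to pin down $\Lambda\Delta^*$, and on solving the resulting matrix identity for $\Delta\Omega\Delta^T = -JM^TJ(M^T)^{-1}$. The only difference is bookkeeping: the paper pulls the dual back to the reference torus via $(M^T)^{-1}$ so that $\Lambda\Delta^* = -J$ exactly as in Lemma~\ref{L:square-abg}, whereas you keep the native dual displacements on $\Torus_M$ and derive the $M$-twisted identity $\Lambda\Delta_M^* = -JM^T$ directly --- your parenthetical pullback remark is precisely what the paper does.
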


\begin{proof}
Suppose $\omega$ is a reciprocal stress for $\underline{M} \circ \Gamma$.  Then by definition there is a geodesic embedding $\underline{M} \circ \Gamma^*\colon G^* \to \Torus_M$ dual to $\underline{M} \circ \Gamma$ such that for every edge $e$, the geodesic segments~$\underline{M}(\Gamma(e))$ and $\underline{M}(\Gamma^*(e^*))$ are orthogonal, and the ratio of their lengths is $\omega_e$.  (The reference embeddings $\Gamma$ and~$\Gamma^*$ on $\Torus_\square$ are also duals, but they are not reciprocal unless the matrix $M$ is orthogonal.)

Let~$\Delta$ denote the $2\times E$ displacement matrix of the reference embedding $\Gamma$, whose columns are the displacement vectors of~$\Gamma$.  The columns of $M\Delta$ are the displacement vectors for~$\underline{M} \circ \Gamma$.  Thus, the displacement row vectors of~$\underline{M} \circ \Gamma^*$ are given by the rows of the $E \times 2$ matrix $(M\Delta\Omega)^\perp$.  Finally, let $\Delta^* = (M\Delta\Omega)^\perp (M^T)^{-1}$ denote the displacement row vectors for the dual reference embedding $\Gamma^*$.  
We can rewrite this definition as
\begin{equation}
\begin{aligned}
\Delta^*
	&= (M\Delta\Omega)^\perp(M^T)^{-1}	
			\\
	&= (M\Delta\Omega)^\perp(M^{-1})^T
			\\
	&= (JM\Delta\Omega)^T(M^{-1})^T
			\\
	&= \Omega \Delta^T\, M^T J^T (M^{-1})^T,
\end{aligned}
\label{Eq:Delta-star}
\end{equation}
which, since $J^T = J^{-1}$, implies that $\Omega\Delta^T = \Delta^*(M^T J^T (M^{-1})^T)^{-1} = \Delta^* M^T J (M^{-1})^T$.
%

Because the rows of $\Delta^*$ are the displacement vectors for $\Gamma^*$, equation \eqref{Eq:circulation} implies that the \emph{columns} of $\Delta^*$ describe circulations in $\Gamma$, and therefore $\Delta\Delta^* = \Lambda\Delta^* = \begin{psmallmatrix} 0 & 1 \\ -1 & 0 \end{psmallmatrix} = -J$ by Lemmas~\ref{L:harmonic} and~\ref{L:cocirc}, as explained in the second paragraph of the proof of Lemma \ref{L:square-abg}.  We conclude that
\begin{align*}
	\Delta\Omega\Delta^T
	&= \Delta\Delta^* M^T J (M^{-1})^T
		= J^T M^T J (M^{-1})^T
		\\
	&= \frac{1}{ad-bc}
			\begin{pmatrix} 0 & 1 \\ -1 & 0 \end{pmatrix}
			\begin{pmatrix} a & c \\ b & d \end{pmatrix}
			\begin{pmatrix} 0 & -1 \\ 1 & 0 \end{pmatrix}
			\begin{pmatrix} d & -c \\ -b & a \end{pmatrix}
		\\
	&= \frac{1}{ad-bc}
			\qquad \begin{pmatrix} b & d \\ -a & -c \end{pmatrix}
			\qquad\quad \begin{pmatrix} b & -a \\ d & -c \end{pmatrix}
		\\
	&= \frac{1}{ad-bc}
			\qquad\quad\begin{pmatrix} b^2+d^2 & -ab-cd \\ -ab-cd & a^2+c^2 \end{pmatrix}.
\end{align*}
Routine calculation now implies that $\alpha\beta - \gamma^2 = \det \Delta\Omega\Delta^T = 1$.
\end{proof}

\begin{corollary}
If $\omega$ is a reciprocal stress for a geodesic embedding $\underline{M} \circ \Gamma\colon G \to \Torus_M$, then~$M = \sigma R\begin{psmallmatrix} \beta & -\gamma \\ 0 & 1 \end{psmallmatrix}$ for some $2\times 2$ rotation matrix~$R$ and some real number $\sigma>0$.
\end{corollary}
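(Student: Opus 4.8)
The plan is to read the required factorization directly off the matrix identity established inside the proof of Lemma~\ref{L:nonsq-abg}, namely $\Delta\Omega\Delta^T = J^T M^T J\,(M^{-1})^T$, and to recognize it as forcing a polar/QR-type decomposition of $M$. First I would simplify the right-hand side using the standard $2\times2$ adjugate identity $J^T A^T J = \operatorname{adj}(A) = (\det A)\,A^{-1}$ (exactly the cancellation carried out by the explicit matrix product in that lemma's proof). Applying it with $A = M$ collapses the identity to
\[
	\Delta\Omega\Delta^T = (\det M)\,M^{-1}(M^{-1})^T = (\det M)\,(M^T M)^{-1}.
\]
Since Lemma~\ref{L:nonsq-abg} also gives $\det(\Delta\Omega\Delta^T) = \alpha\beta-\gamma^2 = 1$, inverting and using that a determinant-one $2\times2$ matrix equals its own adjugate yields
\[
	M^T M = (\det M)\begin{psmallmatrix} \beta & -\gamma \\ -\gamma & \alpha \end{psmallmatrix}.
\]

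Next I would match this against the target triangular factor $T := \begin{psmallmatrix}\beta & -\gamma\\ 0 & 1\end{psmallmatrix}$. A one-line computation gives $T^T T = \begin{psmallmatrix}\beta^2 & -\beta\gamma\\ -\beta\gamma & \gamma^2+1\end{psmallmatrix}$, and substituting $\gamma^2+1 = \alpha\beta$ (again from $\alpha\beta-\gamma^2=1$) factors this as $T^T T = \beta\begin{psmallmatrix}\beta & -\gamma\\ -\gamma & \alpha\end{psmallmatrix}$. Comparing with the previous display produces the key relation $M^T M = \tfrac{\det M}{\beta}\,T^T T$, which says precisely that $M$ and $T$ differ by a conformal (scale-times-orthogonal) factor on the left.

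The final step extracts that factor. I would first record $\det M > 0$: nonsingularity of $M$ forces $a^2+c^2 > 0$, and the formula $\beta = (a^2+c^2)/(ad-bc) > 0$ then forces $ad-bc = \det M > 0$. Hence $\sigma := \sqrt{\det M/\beta}$ is a well-defined positive real, and I set $R := \sigma^{-1} M T^{-1}$. Then
\[
	R^T R = \sigma^{-2}(T^{-1})^T (M^T M) T^{-1} = \sigma^{-2}\,\tfrac{\det M}{\beta}\,(T^{-1})^T T^T T\,T^{-1} = I,
\]
so $R$ is orthogonal, and $\det R = \sigma^{-2}\det M/\det T = \sigma^{-2}(\det M/\beta) = 1$, so $R$ is orientation-preserving, i.e.\ a genuine rotation. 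Rearranging $R = \sigma^{-1}MT^{-1}$ gives $M = \sigma R T$, as claimed.

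The computations are entirely routine; the only substantive points—and hence the main obstacle—are the two sign/orientation issues rather than the algebra: establishing $\det M > 0$ so that $\sigma$ is real, and confirming $\det R = +1$ rather than $-1$ so that $R$ is a rotation and not a reflection. Both follow cleanly from $\beta > 0$ together with $\det T = \beta$, but they are the content that must be checked beyond bookkeeping.
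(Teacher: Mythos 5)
Your proof is correct, but it takes a genuinely different route from the paper's. The paper's entire argument is two sentences: since reciprocality is preserved by rotating and scaling the fundamental parallelogram, it suffices to treat the normal form $M = \begin{psmallmatrix} a & b \\ 0 & 1 \end{psmallmatrix}$, for which the formulas in Lemma~\ref{L:nonsq-abg} immediately give $a = \beta$ and $b = -\gamma$. You never invoke that invariance; instead you extract $M^TM = (\det M)\begin{psmallmatrix} \beta & -\gamma \\ -\gamma & \alpha \end{psmallmatrix}$ from the covariance identity via the adjugate formula, match it against $T^TT = \beta\begin{psmallmatrix} \beta & -\gamma \\ -\gamma & \alpha \end{psmallmatrix}$, and carry out the polar/QR factorization explicitly, verifying $\det M > 0$ and $\det R = +1$ by hand. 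The paper's version is shorter and more conceptual, but it quietly relies on two unproved (if easy) facts: that reciprocality is a similarity invariant, and that every admissible $M$ factors as $\sigma R$ times the normal form --- the latter being exactly the QR step you perform explicitly. Your computation makes both of these visible, and your derivation of $\det M > 0$ from $\beta > 0$ addresses an orientation issue the paper glosses over entirely; the only detail worth adding is a one-line justification that $\beta > 0$ in the first place (e.g., $\alpha, \beta \ge 0$ as weighted sums of squares with positive weights, and $\alpha\beta = 1 + \gamma^2 > 0$).
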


\begin{proof}
Reciprocality is preserved by rotating and scaling the fundamental parallelogram~$\lozenge_M$, so it suffices to consider the special case $M = \begin{psmallmatrix} a & b \\ 0 & 1 \end{psmallmatrix}$.  In this special case, Lemma~\ref{L:nonsq-abg} immediately implies $\beta = a$ and $\gamma = -b$.
\end{proof}

\begin{lemma}
\label{L:abg-nonsq}
Fix an essentially simple, essentially 3-connected geodesic embedding $\Gamma\colon G \to \Torus_\square$ with a positive equilibrium stress $\omega$.  Let $\alpha$, $\beta$, and~$\gamma$ be defined as in~Equation \eqref{Eq:alphabet}.  If $\alpha\beta - \gamma^2 = 1$, then $\omega$ is a reciprocal stress for the embedding $\underline{M} \circ \Gamma\colon G\to \Torus_M$ induced by  the matrix $M = \sigma R\begin{psmallmatrix} \beta & -\gamma \\ 0 & 1 \end{psmallmatrix}$, for any $2\times 2$ rotation matrix~$R$ and any real number $\sigma>0$.
\end{lemma}

\begin{proof}
Suppose $\alpha\beta-\gamma^2 = 1$.  Fix an arbitrary $2\times 2$ rotation matrix $R$ and an arbitrary real number $\sigma>0$, and let $M = \sigma R\begin{psmallmatrix} \beta & -\gamma \\ 0 & 1\end{psmallmatrix}$.  Let~$\Delta$ denote the $2 \times E$ \emph{reference} displacement matrix for $\Gamma$ on the square flat torus~$\Torus_\square$, and define the $E \times 2$ matrix $\Delta^* = (M\Delta\Omega)^\perp (M^T)^{-1}$.

Derivation \eqref{Eq:Delta-star} in the proof of Lemma \ref{L:nonsq-abg} implies $\Delta^* = \Omega \Delta^T (M^{-1} J M)^T$.  We easily observe that $(\sigma R)^{-1}J(\sigma R) = J$, and therefore
\begin{align*}
	M^{-1} J M
	&~=~ \begin{pmatrix} \beta & -\gamma \\ 0 & 1 \end{pmatrix}^{-1}
		\begin{pmatrix} 0 & -1 \\ 1 & 0 \end{pmatrix}
		\begin{pmatrix} \beta & -\gamma \\ 0 & 1 \end{pmatrix}
		\\
	&~=~ \frac{1}{\beta}
		\begin{pmatrix} 1 & \gamma \\ 0 & \beta\end{pmatrix}
		\begin{pmatrix} 0 & -1 \\ 1 & 0 \end{pmatrix}
		\begin{pmatrix} \beta & -\gamma \\ 0 & 1 \end{pmatrix}
		\\
	&~=~ \frac{1}{\beta}
		\begin{pmatrix} \beta\gamma & -1-\gamma^2 \\ \beta^2 & -\beta\gamma \end{pmatrix}
		\\
	&~=~
		\begin{pmatrix} \gamma & -\alpha \\ \beta & - \gamma \end{pmatrix}.
\end{align*}
It follows that
\[
	\Delta\Delta^*
	~=~ \Delta \Omega \Delta^T (M^{-1} J M)^T
	~=~ \begin{pmatrix} \alpha & \gamma \\ \gamma & \beta \end{pmatrix}
	  \begin{pmatrix} \gamma & \beta \\ -\alpha & -\gamma \end{pmatrix}
	~=~ \begin{pmatrix} 0 & \alpha\beta-\gamma^2 \\ \gamma^2-\alpha\beta & 0 \end{pmatrix}
	~=~ -J.
\]
Because $\omega$ is an equilibrium stress on $\Gamma$, for every vertex $p$ of $\Gamma$ we have
\begin{equation}
	\sum_{q\colon pq\in E} \Delta^*_{(\arc{p}{q})^*}
	~=~
	\sum_{q\colon pq\in E} \omega_{pq} \Delta_{\arc{p}{q}}^\perp (M^{-1}JMJ^T)^T
	~=~
	\rowvec{0}{0} (M^{-1}JMJ^T)^T
	~=~
	\rowvec{0}{0}.
	\label{Eq:circulation3}
\end{equation}
Once again, the columns of $\Delta^*$ describe circulations in $\Gamma$, so Lemma~\ref{L:harmonic} implies $\Lambda\Delta^* = \Delta\Delta^* = -J$.  Lemma~\ref{L:dual-embed-on-square} now implies that $\Delta^*$ is the displacement matrix of a embedding~$\Gamma^*\colon G^*\to \Torus_\square$ that is dual to~$\Gamma$.  It follows that 
$(M\Delta\Omega)^\perp = \Delta^*M^T$ is the displacement matrix of $\underline{M}\circ \Gamma^*$.  By construction, each edge of~$\underline{M} \circ \Gamma^*$ is orthogonal to its corresponding edge of~$\underline{M} \circ \Gamma$.  We conclude that $\omega$ is indeed a reciprocal stress for $\underline{M} \circ \Gamma$.
\end{proof}

Our main theorem now follows immediately. 

\begin{theorem}
\label{T:skew-rec-nonlinear}
Fix an essentially simple, essentially 3-connected geodesic embedding $\Gamma\colon G \to \Torus_\square$ with positive equilibrium stress $\omega$.  Let $\alpha$, $\beta$, and~$\gamma$ be defined as in~Equation \eqref{Eq:alphabet}.
If~$\alpha\beta - \gamma^2 = 1$, then $\omega$ is a reciprocal stress for the embedding $\underline{M} \circ \Gamma\colon G\to\Torus_M$ if and only if~$M = \sigma R \begin{psmallmatrix} \beta & -\gamma \\ 0 & 1 \end{psmallmatrix}$ for any rotation matrix~$R$ and any real number $\sigma>0$.  On the other hand, if $\alpha\beta - \gamma^2 \ne 1$, then $\omega$ is not a reciprocal stress for the affine image of $\Gamma$ on any flat torus. 
\end{theorem}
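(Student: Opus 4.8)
The plan is to assemble the theorem directly from the three results immediately preceding it, since all of the geometric and linear-algebraic content has already been established. I would first note that $\alpha$, $\beta$, and $\gamma$ are fixed invariants of the pair $(G,\omega)$, computed on the reference torus $\Torus_\square$ via Equation~\eqref{Eq:alphabet}; hence $\alpha\beta-\gamma^2$ is a single number determined by $G$ and $\omega$ alone. The theorem then splits along the value of this number: the case $\alpha\beta-\gamma^2=1$, where I must give an exact characterization of the reciprocal tori, and the case $\alpha\beta-\gamma^2\ne 1$, where I must rule out reciprocality on every flat torus.

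For the case $\alpha\beta-\gamma^2=1$, I would prove the biconditional in two directions. The ``if'' direction---that $\omega$ is a reciprocal stress for the image of $G$ on $\Torus_M$ whenever $M=\sigma R\begin{psmallmatrix}\beta & -\gamma\\0&1\end{psmallmatrix}$ for some rotation matrix $R$ and some $\sigma>0$---is exactly the statement of Lemma~\ref{L:abg-nonsq}, so nothing further is required. The ``only if'' direction---that every matrix $M$ for which $\omega$ is reciprocal necessarily has this form---is precisely the corollary to Lemma~\ref{L:nonsq-abg}, which extracts the shape of $M$ from reciprocality up to rotation and scaling. Combining these two facts yields the claimed characterization verbatim.

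For the case $\alpha\beta-\gamma^2\ne 1$, I would argue by the contrapositive of Lemma~\ref{L:nonsq-abg}. If $\omega$ were a reciprocal stress for the image of $G$ on some flat torus $\Torus_M$, then Lemma~\ref{L:nonsq-abg} would force $\det(\Delta\Omega\Delta^T)=\alpha\beta-\gamma^2=1$, contradicting the hypothesis; hence no such $\Torus_M$ can exist. There is no substantial obstacle here, as the work lives entirely in the cited lemmas; the one point meriting care---and the closest thing to a pitfall---is confirming that the $\alpha$, $\beta$, $\gamma$ appearing in Lemma~\ref{L:nonsq-abg} are the reference-torus quantities named in the theorem rather than quantities recomputed on $\Torus_M$. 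This alignment holds because Lemma~\ref{L:nonsq-abg} is itself phrased in terms of the reference displacement matrix $\Delta$ on $\Torus_\square$, so once that identification is noted, the three results combine immediately and the theorem follows without additional computation.
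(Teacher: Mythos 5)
Your proposal is correct and matches the paper exactly: the paper gives no separate proof of Theorem~\ref{T:skew-rec-nonlinear}, stating only that it ``follows immediately'' from Lemma~\ref{L:abg-nonsq} (the ``if'' direction), the Corollary to Lemma~\ref{L:nonsq-abg} (the ``only if'' direction), and the contrapositive of Lemma~\ref{L:nonsq-abg} (the case $\alpha\beta-\gamma^2\ne 1$). Your observation that $\alpha$, $\beta$, $\gamma$ are always the reference-torus quantities is the right point to flag, and it is exactly the convention the paper establishes at the start of the ``Arbitrary Flat Tori'' subsection.
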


When $\omega$ is positive, \(
	\alpha\beta-\gamma^2
	=
	\frac{1}{2}\sum_{e,e'} \omega_e \omega_{e'}
			\big|
				\begin{smallmatrix}
				\Dx_{e\;} & \Dy_{e\;} \\
				\Dx_{e'} & \Dy_{e'}
				\end{smallmatrix}
			\big|^2
	> 0
\), so in fact the requirement $\alpha\beta - \gamma^2 = 1$ is just a scaling condition: Given any positive equilibrium stress $\omega$, the scaled equilibrium stress $\omega/\sqrt{\alpha\beta-\gamma^2}$ satisfies the requirement. In short, \emph{every} positive equilibrium embedding on any flat torus has a coherent affine image on \emph{some} essentially unique flat torus.

The results of this section can be reinterpreted in terms of force diagrams as follows:

\begin{lemma}
\label{L:forceM}
Fix an essentially simple, essentially 3-connected geodesic embedding $\Gamma\colon G \to \Torus_\square$, and let $\omega$ be an equilibrium stress for $\Gamma$.  For any non-singular matrix $M$, every force diagram of $\underline{M} \circ \Gamma$ with respect to $\omega$ lies on the flat torus $\Torus_N$, where $N = J M \Delta\Omega\Delta^T J^T$.
\end{lemma}

\begin{proof}
We argue exactly as in the proof of Lemma \ref{L:force}.
Let $\Digamma$ be any force diagram of~$\underline{M} \circ \Gamma$ with respect to $\omega$, and let $\Delta$ be the displacement matrix of the reference embedding~$\Gamma$.  Then the displacement matrix of~$\Digamma$ is $\Delta^* = (M\Delta\Omega)^\perp = \Omega\Delta^TM^TJ^T$.  Equation~\eqref{Eq:circulation3} and Lemma~\ref{L:harmonic} imply that $\Lambda\Delta^* = \Delta\Omega\Delta^TM^TJ^T$.

Now let $N = J M \Delta\Omega\Delta^T J^T$.  We immediately have $J^{-1} N^T = \Lambda\Delta^*$ and therefore $\Lambda\Delta^*(N^T)^{-1} = J^{-1} = -J$.
Lemma~\ref{L:dual-embed-on-square} implies that $\Delta^*(N^T)^{-1}$ is the displacement matrix of a geodesic embedding $\Gamma^* \colon G^* \to \Torus_\square$ dual to $\Gamma$.  It follows that $\Delta^*$ is the displacement matrix of the affine image of $\Gamma^*$ on $\Torus_N$; in other words, our force diagram $\Digamma$ is a translation of $\underline{N} \circ \Gamma^*$.
\end{proof}

\subsection{Example}

Let us revisit the example embedding $\Gamma$ from Section \ref{SS:example}: the symmetric embedding of $K_7$ on the square flat torus $\Torus_\square$.  Symmetry implies that $\Gamma$ is in equilibrium with respect to the uniform stress $\omega \equiv 1$.  Straightforward calculation gives us the covariance parameters~$\alpha = \beta = 2$ and $\gamma = 1$ for this stress vector.  Thus, Lemma~\ref{L:square-abg} immediately implies that $\omega$ is not a reciprocal stress for $\Gamma$; rather, by Lemma~\ref{L:force}, the force diagram of $\Gamma$ with respect to $\omega$  lies on the torus $\Torus_M$, where $M = \begin{psmallmatrix} \beta & -\gamma \\ -\gamma & \alpha \end{psmallmatrix} = \begin{psmallmatrix} 2 & -1 \\ -1 & 2 \end{psmallmatrix}$.  Moreover, because $\alpha\beta-\gamma^2 = 3\ne 1$, Lemma~\ref{L:nonsq-abg} implies that $\omega$ is not a reciprocal stress for the affine image of $\Gamma$ on \emph{any} flat torus.  In short, there are no reciprocal embeddings of $\Gamma$ and~$\Gamma^*$ on \emph{any} flat torus such that corresponding primal and dual edges have equal length.

Now consider the scaled uniform stress $\omega \equiv 1/\sqrt3$, which has covariance parameters~$\alpha = \beta = 2/\sqrt{3}$ and $\gamma = 1/\sqrt{3}$.  This new stress $\omega$ is still not a reciprocal stress for $\Gamma$; however, it does satisfy the scaling constraint $\alpha\beta - \gamma^2 = 1$. Lemma~\ref{L:nonsq-abg} (or Lemma \ref{L:forceM}) implies that $\omega$ is a reciprocal stress for the affine image of $\Gamma$ on $\Torus_M$, where $M = \frac{1}{\sqrt{3}} \begin{psmallmatrix} 2 & -1 \\ 0 & \sqrt{3} \end{psmallmatrix}$.  The fundamental parallelogram $\lozenge_M$ is the union of two equilateral triangles with height~$1$.  Not surprisingly, this transformed embedding is a Delaunay triangulation with equilateral triangle faces, and the faces of the reciprocal Voronoi diagram $\Gamma^*$ (which is also the force diagram) are regular hexagons.  Finally, the vector $\omega^* \equiv \sqrt{3}$ is a reciprocal stress, and therefore an equilibrium stress, for $\Gamma^*$.  See Figure~\ref{F:skew}.

\begin{figure}[ht]
\centering
\raisebox{-0.5\height}{\includegraphics[scale=0.5,page=1]{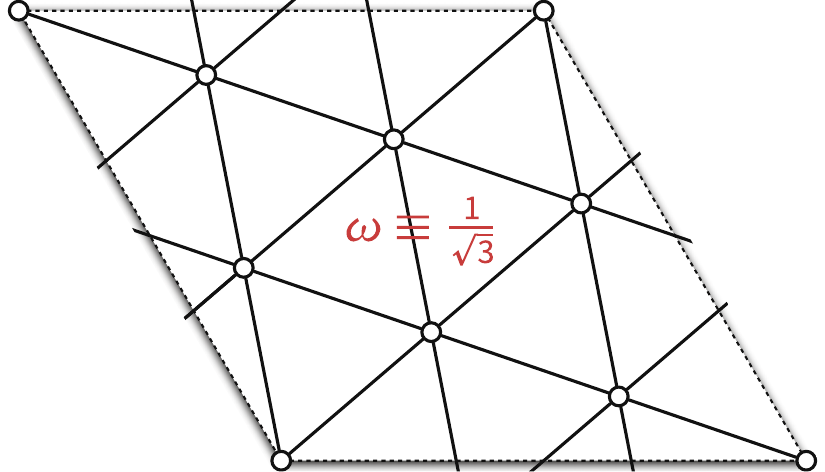}}
\raisebox{-0.5\height}{\includegraphics[scale=0.5,page=2]{Fig/K7-torus-skew}}
\caption{A seven-vertex Delaunay triangulation and its dual Voronoi diagram, induced by the uniform stress $1/\sqrt{3}$; compare with Figures \ref{F:Delaunay} and \ref{F:uniform}.}
\label{F:skew}
\end{figure}

\section{A Toroidal Steinitz Theorem}
\label{S:Steinitz}

Finally, Theorem \ref{Th:tutte} and Theorem \ref{T:skew-rec-nonlinear} immediately imply a natural generalization of Steinitz's theorem to graphs on the flat torus.

\begin{theorem}
\label{Th:torus-steinitz}
Let $\Gamma\colon G\to \Torus_\square$ be any essentially simple, essentially 3-connected embedding on the square flat torus, and let $\omega$ be \textbf{any} positive stress on the edges of $\Gamma$.  Then $\Gamma$ is homotopic to a geodesic embedding $\Gamma'\colon G\to \Torus_\square$ whose affine image on some flat torus $\Torus_M$ is coherent, such that $\omega$ is a reciprocal stress for $\underline{M}\circ \Gamma'$.
\end{theorem}

As we mentioned in the introduction, Mohar's generalization \cite{m-cpmec-97} of the Koebe-Andreev circle packing theorem already implies that each essentially simple, essentially 3-connected torus embedding~$\Gamma$ is homotopic to \emph{one} coherent embedding on \emph{one} flat torus.  In contrast, our results characterize \emph{all} coherent embeddings on \emph{all} flat tori.  Every positive stress vector $\omega\in\Real^E$ for~$\Gamma$ corresponds to a coherent embedding homotopic to $\underline{M}\circ\Gamma$, which is unique up to translation, on a flat torus $\Torus_M$, which is unique up to similarity of the fundamental parallelogram~$\lozenge_M$.  On the other hand, Lemmas~\ref{L:rec-implies-equ} and \ref{L:coh-implies-recip} imply that every coherent embedding on every flat torus corresponds to a unique positive equilibrium stress.


\paragraph*{Acknowledgements.}  We thank the anonymous reviewers of both the conference \cite{el-tmcdc-20} and journal versions of this paper for their helpful comments and suggestions.

\bibliographystyle{bib/newuser-doi}
\bibliography{mctorus-full}

\end{document}